\documentclass[12pt,a4paper,reqno]{amsart}
\usepackage{amssymb}
\usepackage[T1]{fontenc}
\usepackage{mathptmx}
\usepackage[libertine]{newtxmath}
\usepackage{mathrsfs}
\usepackage{tikz-cd}
\usepackage[headings]{fullpage}
\usepackage{paralist}
\usepackage{xspace}
\usepackage{fonttable}
\usepackage{amsmath}
\usepackage{color}
\usepackage[normalem]{ulem}

\usepackage[colorlinks=true,linkcolor=blue, citecolor=blue, urlcolor=blue, pagebackref=true]{hyperref}

\usepackage{array, booktabs}
\usepackage{makecell}
\setcellgapes{2.5pt}
\theoremstyle{plain}
\newtheorem{theorem}[equation]{Theorem}
\newtheorem{lemma}[equation]{Lemma}

\newtheorem{proposition}[equation]{Proposition}

\theoremstyle{definition}

\newtheorem{remark}[equation]{Remark} 

\newtheorem{example}[equation]{Example} 

\newtheorem{definition}[equation]{Definition}

\newtheorem{notation}[equation]{Notation}

\newtheorem{discussion}[equation]{Discussion}

\newtheorem{observation}[equation]{Observation}

\newtheorem{construction}[equation]{Construction}

\newcommand{\er}{^e\!R}
\DeclareMathOperator{\rank}{rank}
\newcommand{\frakm}{{\mathfrak m}}

\newcommand{\calP}{\mathcal P}
\DeclareMathOperator{\ehk}{e_{HK}}
\newcommand{\naturals}{\mathbb{N}}
\newcommand{\ints}{\mathbb{Z}}
\newcommand{\bbk}{\mathbb{k}}
\def\too{\longrightarrow}

\DeclareMathOperator{\charact}{char}
\DeclareMathOperator{\projective}{\mathbb{P}}
\DeclareMathOperator{\Proj}{Proj}
\DeclareMathOperator{\Tor}{Tor}

\begin{document}
\title{On Frobenius Betti numbers of graded rings of finite Cohen-Macaulay type}
\author{Nirmal Kotal}
\address{Chennai Mathematical Institute, Siruseri, Tamilnadu 603103. India}
\email{nirmal@cmi.ac.in}
\subjclass[2020]{13A35, 13D02, 13C14, 13C05}
\keywords{Frobenius Betti number, Hilbert-Kunz multiplicity, finite Cohen-Macaulay type}
\begin{abstract}
The notion of Frobenius Betti numbers generalizes the Hilbert-Kunz multiplicity theory and serves as an invariant that measures singularity.
However, the explicit computation of the Frobenius Betti numbers of rings has been limited to very few specific cases. This article focuses on the explicit computation of Frobenius Betti numbers of Cohen-Macaulay graded rings of finite Cohen-Macaulay type.
\end{abstract}

\maketitle
\section{Introduction}
\numberwithin{equation}{section}
Let $(R,\frakm,\bbk)$ denote a local ring of prime characteristic $p$ and of dimension $d$. Let $F:R\to R$ be the \emph{Frobenius map} that sends an element $r$ of $R$ to $r^p$. For $e\in \naturals$, the $R$-module $R$ via the $e$-fold iterated Frobenius map $F^e:R\to R$, is denoted as $\er$. We also assume $R$ is \emph{$F$-finite}, that is, $\er$ is a finitely generated $R$-module. When this is the case, the vector space dimension $\left[\bbk:\bbk^p \right]$ is finite, say $\alpha=\left[\bbk:\bbk^p \right]$. For an ideal $I\subset R$, define the $p^e$-th Frobenius power of $I$ as $I^{\left[p^e\right]} := \left(r^{p^e} \mid r \in  I\right)$. 

The \emph{Hilbert-Kunz multiplicity} of $R$ is
\begin{align*}
	\ehk(R):=&\lim_{e\to \infty}\frac{\lambda\left( R/\frakm^{[p^e]}\right)}{p^{de}},
\end{align*}
where $\lambda\left(\cdot\right)$ denotes the length function. The Hilbert-Kunz multiplicity provides a quantitative measure of singularity. For instance, when $R$ is unmixed, then $\ehk(R)=1$ if and only if $R$ is regular \cite[Theorem 1.5]{WY00hilbertkunz}. Furthermore, under certain mild conditions, the ring becomes Gorenstein and $F$-regular when the Hilbert-Kunz multiplicity approaches $1$ \cite[Corollary~3.6]{AE08lower}.

Let $a_e$ be the highest rank of a free $R$-module that appears in a direct sum decomposition of $\er$. Write $\er=R^{a_e}\oplus M_e$ as an $R$-module, where $M_e$ has no free direct summands. The \emph{$F$-signature} of $R$, represented as $s(R)$, is
\begin{align*}
	s(R):=&\lim_{e\to \infty}\frac{a_e}{p^{(\alpha+d)e}}.
\end{align*}
The $F$-signature is capable of identifying singularities as well. For example, when $R$ is reduced, then the $F$-signature is positive if and only if $R$ is strongly $F$-regular \cite[Theorem 0.2]{AL03the_fsignature}.

The $i$-th \emph{Betti number} of $\er$ is $\beta_i(\er)= \dim_\bbk \Tor_{i}(\er,\bbk)$. In other words, we have a free resolution of $\er$ as follows:
\begin{align*}
	\cdots  \to R^{\beta_{2}(\er)} \to R^{\beta_{1}(\er)}\to R^{\beta_{0}(\er)} \to \!\er \to 0.
\end{align*}
Define the $i$-th \emph{Frobenius Betti number} as
\begin{align*}
	\beta_i^F(R):=&\lim_{e\to \infty}\frac{\beta_i(\er)}{p^{(\alpha+d)e}}.
\end{align*}
In particular, $\beta_{0}^F(R)$ is the Hilbert-Kunz multiplicity of $R$. The notion of Frobenius Betti numbers was first introduced in \cite{Li08char} and subsequently further extended in \cite{SH17frobenius_betti}. These invariants can also be used to detect regularity. Specifically, $R$ is regular if and only if $\beta_i^F(R)=0$ for some $i>0$ \cite[Corollary 3.2]{AL08asymptotic}. If $R$ is a hypersurface, then $\beta_i^F(R)=\ehk(R)-s(R)$ for all $i>0$ \cite[Example 3.2]{SH17frobenius_betti}.

Computing all the Frobenius Betti numbers is as challenging as computing the syzygies. Even if $R$ has \emph{finite $F$-representation type} (in short, \emph{FFRT}), meaning that the collection of isomorphism classes of indecomposable modules that appear as a direct summand of $\er$ as an $R$-module for some $e$ is finite, there is almost no control over the syzygies of $\er$. Another nice class of rings would be \emph{finite Cohen-Macaulay type} (in short, finite CM type), that is, up to isomorphism, $R$ has only a finite number of indecomposable maximal Cohen-Macaulay (MCM) $R$-modules. When $R$ is a  Cohen-Macaulay (CM) ring, the class of finite CM type rings is a subclass of FFRT rings.

Eisenbud and Herzog classified graded CM rings of finite CM type in characteristic zero setup \cite{EisenbudHerzogFinCM1988}. However, the proof can be extended to prime characteristic setup (see Section \ref{sec:classification_cm_finite_type}). More precisely, we show the following:

\begin{proposition}
	\label{prop:finite_CM_type}
	Let $\Bbbk$ be an algebraically closed field of characteristic $p \neq 2$ and $R$ be a standard graded CM $\Bbbk$-algebra of finite CM type of dimension $d\geq 2$. Suppose that $R$ is neither regular, nor a hypersurface. Then $R$ is isomorphic to one of the following rings:
	\begin{enumerate}[(i)]
		\item Scroll of type $m, m\geq 3$, that is,
		$\frac{\bbk[x_0,\ldots,x_m]}{I}$, where $I$ is the ideal generated by all the $2\times 2$ minor of the matrix $\begin{pmatrix}
			x_0 & x_1 & \cdots &x_{m-1}\\
			x_1 & x_2 &\cdots & x_{m}
		\end{pmatrix}.$ These rings are sometimes referred to as the rational normal curve of degree $m$, and are isomorphic to the $\bbk$-algebra $\bbk[x^m,x^{m-1}y,\ldots,y^m]$.
		\item Scroll of type $(2,1)$, that is, $\frac{\bbk[x_0,x_1,x_2,y_0,y_1]}{I}$, where $I$ is the ideal generated by all the $2\times2$ minor of the matrix $\begin{pmatrix}
			x_0 & x_1  &y_0\\
			x_1 & x_2 & y_1
		\end{pmatrix}$, which is same as the $\bbk$-algebra $\mathbb{k}[x^2,xy,y^2,xz,yz]$.
		\item  The invariant ring $\Bbbk[x_1,x_2,x_3]^{\ints/2\ints}$, which is  same as the $\bbk$-algebra  $\bbk[x^2,y^2,z^2,xy,xz,yz]$.
	\end{enumerate}
\end{proposition}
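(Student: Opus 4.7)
The plan is to adapt the Eisenbud--Herzog proof \cite{EisenbudHerzogFinCM1988} to characteristic $p \neq 2$ by isolating the steps that genuinely use characteristic zero and replacing them with characteristic-free arguments. I would organize the proof around two classical pillars: (a) rings of finite CM type must have minimal multiplicity, and (b) the del Pezzo--Bertini classification of nondegenerate projective varieties of minimal degree.

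First, I would show that $R$ has minimal multiplicity, i.e.\ $e(R) = \mathrm{codim}(R) + 1$. The standard argument is homological: if $e(R) \geq \mathrm{codim}(R)+2$, one can exhibit an infinite family of pairwise non-isomorphic indecomposable MCM modules, either via Auslander--Reiten translates or by explicit matrix factorizations on an appropriate hyperplane section. Since none of these constructions requires characteristic zero, they transfer to our setting. Combined with the hypothesis that $R$ is not a hypersurface, this lets me pass to the projective variety $X = \Proj(R) \subset \projective^N_{\bbk}$, which is nondegenerate, irreducible (as $R$ is a domain once we use finite CM type $+$ CM to force reducedness and irreducibility), and of minimal degree.

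Second, I would invoke the del Pezzo--Bertini classification of varieties of minimal degree, which is valid in all characteristics: $X$ must be either a rational normal scroll $S(a_1,\ldots,a_k)$, the Veronese surface $V \subset \projective^5$, a smooth quadric, or a cone over one of these. Hypersurfaces (quadrics) are already excluded. Among scrolls of dimension $\geq 2$, finite CM type forces very short type: a careful module-theoretic analysis (as in Auslander--Reiten theory for the scrolls, or by direct computation of the indecomposable MCM modules over $\bbk[x_0,\dots,x_{a+b+1}]/I_2$) rules out every type except $(2,1)$; for $1$-dimensional scrolls, i.e.\ rational normal curves of degree $m$, the ring $\bbk[x^m, x^{m-1}y, \dots, y^m]$ is a Veronese subring of $\bbk[x,y]$ and automatically has finite CM type. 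For the Veronese surface, the ring $\bbk[x,y,z]^{(2)}$ coincides with $\bbk[x_1,x_2,x_3]^{\ints/2\ints}$ and is a quotient singularity, hence of finite CM type by Auslander's theorem (valid as long as the group order is a unit, which is why $p \neq 2$ enters here). Finally, I would rule out the cone cases by noting that a cone over a variety of minimal degree other than those above gives infinite CM type, either by reducing modulo a regular sequence of linear forms and invoking the lower-dimensional case, or by exhibiting an explicit infinite family.

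The main obstacle, and the reason the characteristic hypothesis $p \neq 2$ cannot be removed, is the Veronese-surface case: the identification $\bbk[x,y,z]^{(2)} \cong \bbk[x,y,z]^{\ints/2\ints}$ and Auslander's finite-CM-type theorem for quotient singularities both break down when $2$ is not invertible. Everywhere else, the argument is essentially characteristic-free: one just has to verify, step by step, that the Auslander--Reiten sequences, matrix factorization constructions, and the del Pezzo--Bertini classification used by Eisenbud--Herzog do not secretly rely on characteristic zero. I would make each of these verifications explicit in the body of the proof.
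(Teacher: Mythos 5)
Your overall skeleton---minimal multiplicity, then the del Pezzo--Bertini classification of varieties of minimal degree (valid in all characteristics, via \cite{EisenbudHarrisMinDeg87}), then sorting the scrolls by Auslander--Reiten theory and handling the Veronese surface through the invariant-ring description with $p \neq 2$---matches the paper's. But there is a genuine gap at the step you dispatch most quickly: finite CM type implies minimal multiplicity. You claim that when $e(R) \geq \operatorname{codim}(R) + 2$ one can exhibit infinitely many pairwise non-isomorphic indecomposable MCM modules, and that ``none of these constructions requires characteristic zero.'' This is exactly the step where the Eisenbud--Herzog argument (their Theorem~B in \cite{EisenbudHerzogFinCM1988}) is \emph{not} characteristic-free: it rests on the general position theorem for hyperplane sections from \cite{ACGH85}, a characteristic-zero result. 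Asserting that the construction transfers is assuming precisely what needs a new argument. The paper replaces this step, following \cite[17.2]{YoshinoCMModules1990}, by a Hilbert-series argument: Theorem~A of \cite{EisenbudHerzogFinCM1988} (which requires only an infinite field) shows $R$ is stretched, with Hilbert series $\frac{1+ct+t^2+\cdots+t^r}{(1-t)^d}$; a theorem of Stanley \cite[Theorem 4.4.9]{BH93} forces $c = 1$ or $r \leq 2$; if $c = 1$ or $r = 2$ the numerator is symmetric, so $R$ is Gorenstein by \cite[Corollary 4.4.6]{BH93}, and then Herzog's theorem (a complete Gorenstein ring of finite CM type is regular or a hypersurface, \cite[Theorem 9.15]{LeuschkeWiegandCMReprs2012}) contradicts your hypotheses. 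Hence $c > 1$ and $r = 1$, which is minimal multiplicity. Without a substitute of this kind, your proof does not get off the ground in characteristic $p$.

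A secondary but real problem is how you obtain irreducibility and rule out cones. The clean characteristic-free tool, which the paper uses and you omit entirely, is Auslander's theorem that finite CM type implies an isolated singularity (\cite[4.22]{YoshinoCMModules1990} or \cite[7.12]{LeuschkeWiegandCMReprs2012}): this yields normality (hence $R$ is a domain, which you assert but do not justify) and smoothness of $\Proj R$, and it excludes all nontrivial cones at a stroke, since a projective cone over a positive-dimensional variety of minimal degree is singular at its vertex. Your proposed alternative---cutting by a regular sequence of linear forms and invoking the lower-dimensional case---has no supporting theorem behind it: finite CM type is not known to descend to hyperplane sections in the direction you would need. Once these two points are repaired, your remaining case analysis (scrolls of types $(m)$, $(1,1)$, $(2,1)$ by \cite[Theorem 3.2]{AR89cmtypeofcmrings}, with $(1,1)$ excluded as the quadric hypersurface; the Veronese surface identified with $\Bbbk[x_1,x_2,x_3]^{\ints/2\ints}$ for $p \neq 2$ and of finite CM type by \cite[Theorem 4.1]{AR89cmtypeofcmrings}) agrees with the paper's and is correct.
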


As previously stated, the Frobenius Betti numbers are zero for regular rings and remain constant for hypersurfaces; we are specifically interested in the non-regular and non-hypersurface cases.
We explicitly compute the Frobenius Betti numbers (in prime characteristic setup) for $(1)$ $\bbk[x^\delta,x^{\delta-1}y,\ldots,y^\delta]$, $\delta \geq 2$ (Section \ref{sec:scroll_m}), $(2)$ $\mathbb{k}[x^2,xy,y^2,xz,yz]$ (Section \ref{sec:scroll_21}), and $(3)$ $\bbk[x^2,y^2,z^2,xy,xz,yz], \charact \bbk \neq 2$ (Section \ref{sec:fixed_ring}). Thus, the computation encompasses all the standard graded CM $\bbk$-algebra of finite CM type. While we also obtain the Hilbert-Kunz multiplicity and the $F$-signature from the computation, we remark that they were already known in some cases.
 
We summarize the main results in Table \ref{table1}.
\begin{center}
	\makegapedcells
	\begin{table}[h!]
		\label{table1}
	
\begin{tabular}{|c|c|c|c|c|}	
	\hline 
	$R$&
	\begin{tabular}{c}$s(R)$
	\end{tabular}
&\begin{tabular}{c}
		 $\ehk(R)$ 
	\end{tabular} &
\begin{tabular}{c}$\beta_{i}^F(R),$\\ $i\geq 1$
\end{tabular}& Reference
\\
\hline
\begin{tabular}{c}
	$\mathbb{k}[x^\delta,x^{\delta-1}y,\ldots,y^\delta],$\\ $\delta\geq 2$
\end{tabular}
&$\frac{1}{\delta}$ &$\frac{\delta+1}{2}$&$\frac{\delta(\delta-1)^i}{2}$&Theorem \ref{thm:rational_normal_curve}\\
\hline
$\bbk[x^2,xy,y^2,xz,yz]$ & $\frac{5}{12}$ & $\frac{7}{4}$ & $\frac{9}{4}\times 2^{i-1}$&Theorem \ref{thm:scroll21}\\
\hline 
\begin{tabular}{c}
	$\bbk [x^2,y^2,z^2,xy,xz,yz],$ \\ $\operatorname{char}\bbk\neq 2$
\end{tabular} & $\frac{1}{2}$ & $2$ & $4\times 3^{i-1}$&Theorem \ref{thm:second_veronese}\\
\hline 
\end{tabular}
\caption{Frobenius Betti numbers computation}
	\end{table}
\end{center}

\section{Preliminaries}
\numberwithin{equation}{section}
Throughout, all rings are Noetherian. Let $(R,\frakm,\bbk)$ be a local ring and $M$ be a finitely generated $R$-module. We denote $\mu(M)$ as the cardinality of a minimal generating set of $M$ as an $R$-module. Thus, $\mu(M)=\dim_{\bbk}\left(\frac{M}{\frakm M}\right)=\dim_{\bbk}\left( M\otimes_R \bbk \right)$.

Let $\bbk$ be a field, and $R=\oplus_{i\geq 0}R_i$ be a finitely generated standard graded $\bbk$-algebra, that is, $R_0=\bbk$ and $R$ is generated as a $\bbk$-algebra by homogeneous elements of degree $1$. Let $M=\oplus_iM_i$ be a finitely generated graded $R$-module. Define the \emph{Hilbert series} as $H(M,t):=\sum_i \dim_{\bbk}M_it^i$. For an integer $l$, define $M(l)$ to be the same as $M$ but with a change in grades as follows: the $i$-th graded piece of $M(l)$ is the $l+i$-th graded piece of $M$. Note that $H(M,t)=t^lH(M(l),t)$ for all $l$.
\subsection{Prime characteristic notions}
Let $(R,\frakm,\bbk)$ be an $F$-finite local domain of prime characteristic $p$ and $\alpha=\left[\bbk:\bbk^p \right]$. Let $K$ be the quotient field of $R$, and $\overline{K}$ denotes its algebraic closure. Thus, we have the inclusion $R\subset K \subset \overline{K}$. Define $R^{\frac{1}{p^e}}:=\left\{x\in \overline{K} \mid x^{p^e}\in R  \right\}$. That is, $R^{\frac{1}{p^e}}$ is the collection of all $p^e$-th roots of elements of $R$. In this setup, we can view the Frobenius map as the natural inclusion $R\hookrightarrow R^{\frac{1}{p^e}}$, implying that $\er\simeq R^{\frac{1}{p^e}}$ as an $R$-module.

The following observation is a consequence of the arguments in the proof of \cite[Proposition 3.10]{
sannai15}. Assume that $R$ is Cohen-Macaulay but not Gorenstein. Suppose that $\er=R^{a_e}\oplus \omega_R^{b_e}\oplus M_e$; where $a_e,b_e$ are some integers, $\omega_R$ is the canonical module, and $M_e$ has no direct summand of $R$ or $\omega_R$. Then 
\begin{align}
	\label{eq:sr_ae_be}
	s(R)=\lim_{e\to \infty} \frac{a_e}{p^{(\alpha+d)e}}=\lim_{e\to \infty} \frac{b_e}{p^{(\alpha+d)e}}.
\end{align}
With the view of the following proposition to compute the Frobenius Betti numbers (which also include Hilbert-Kunz multiplicity), if necessary, we may assume $R$ is complete and $\bbk$ is an algebraically closed field.
\begin{proposition}{\cite[Proposition 3.14]{SH17frobenius_betti}}
	Let $(R,\frakm,\bbk)\to (S,\mathfrak{n},\bbk')$ be a flat map between two $F$-finite local rings such that $\frakm S=\mathfrak{n}$. Then $\beta_{i,R}^F(R)=\beta_{i,S}^F(S)$ for all $i\geq 0$. In particular, $\beta_{i,R}^F(R)=\beta_{i,\widehat{R}}^F(\widehat{R})$ for all  $i\geq 0$, where $\widehat{R}$ denotes the completion of $R$.
\end{proposition}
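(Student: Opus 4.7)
The plan is to chain two comparisons: first relate $\er$ over $R$ to $\er\otimes_R S$ over $S$ by flat base change, and then relate $\er\otimes_R S$ to ${}^e\!S$ via the relative Frobenius. The second comparison supplies a multiplicative factor $p^{(\alpha'-\alpha)e}$ that absorbs the discrepancy between the normalizations $p^{(\alpha+d)e}$ and $p^{(\alpha'+d)e}$.

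For the first comparison, take a minimal graded free $R$-resolution $P_\bullet\to\er$. Flatness of $\phi\colon R\to S$ makes $P_\bullet\otimes_R S\to\er\otimes_R S$ a free $S$-resolution, and the hypothesis $\frakm S=\frakn$ keeps the entries of its differentials in $\frakn$, so this resolution is still minimal. Hence $\beta_i^S(\er\otimes_R S)=\beta_i^R(\er)$ for every $i\geq 0$; equivalently this is flat base change for Tor, since $\Tor_i^R(\er,\bbk)\otimes_\bbk\bbk'\cong\Tor_i^S(\er\otimes_R S,\bbk')$ and dimensions are preserved under extension of scalars of finite-dimensional vector spaces.

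For the second comparison, the extension $R^{1/p^e}\hookrightarrow S^{1/p^e}$ of $\phi$ induces an $S$-linear map $\er\otimes_R S\to {}^e\!S$. Computing generic ranks shows $\rank_S(\er\otimes_R S)=p^{(\alpha+d)e}$ and $\rank_S({}^e\!S)=p^{(\alpha'+d)e}$, so the two modules differ by a factor $p^{(\alpha'-\alpha)e}$ recording the Frobenius behavior of the residue-field extension $\bbk\subset\bbk'$. Choosing representatives for $\bbk'^{1/p^e}$ over $\bbk^{1/p^e}\otimes_\bbk\bbk'$ (a set of size exactly $p^{(\alpha'-\alpha)e}$), one can exhibit ${}^e\!S$ as, up to lower-order correction, a direct sum of $p^{(\alpha'-\alpha)e}$ copies of $\er\otimes_R S$. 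Propagating this through a minimal resolution yields $\beta_i^S({}^e\!S)=p^{(\alpha'-\alpha)e}\beta_i^S(\er\otimes_R S)+o(p^{(\alpha'+d)e})$ at each $i$, and dividing by $p^{(\alpha'+d)e}$ gives $\beta_i^F(S)=\beta_i^F(R)$. The ``in particular'' clause is the case $S=\widehat{R}$: there $\bbk=\bbk'$ and $\alpha=\alpha'$, and ${}^e\!\widehat{R}\cong\er\otimes_R\widehat{R}$, so the second comparison is automatic.

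The main obstacle is the second comparison for $i>0$: although the rank identity is immediate at $i=0$, propagating the factor $p^{(\alpha'-\alpha)e}$ through higher syzygies requires a careful change-of-rings argument, most naturally after first reducing to the complete case and invoking Cohen's structure theorem to choose compatible $p$-bases.
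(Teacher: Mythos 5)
Your first comparison is correct and is exactly half of the standard argument: since $\frakm S=\frakn$, a minimal free resolution stays minimal after the flat base change, so $\beta_i^S(\er\otimes_R S)=\beta_i^R(\er)$ for all $i$. The second comparison, however, is a genuine gap, and the structural claim you propose to fill it with is false in general. Nothing in the hypotheses forces $\alpha'\geq\alpha$: take $\bbk=\mathbb{F}_p(t)$, $R=\bbk[x]_{(x)}$ and $S=\overline{\bbk}[x]_{(x)}$; this map is flat with $\frakm S=\frakn$, both rings are $F$-finite, and $\alpha=1$ while $\alpha'=0$, so ``a set of size exactly $p^{(\alpha'-\alpha)e}$'' is meaningless, and ${}^e\!S$ (of generic rank $p^{e}$ over $S$) cannot contain even one copy of $\er\otimes_R S$ (of generic rank $p^{2e}$). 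Worse, $\bbk^{1/p^e}\otimes_\bbk\bbk'$ need not be a field over which $\bbk'^{1/p^e}$ is free: in the same example $\bbk^{1/p}\otimes_\bbk\overline{\bbk}\cong\overline{\bbk}[u]/\big((u-t^{1/p})^p\big)$ is non-reduced, so the natural map $\er\otimes_R S\to{}^e\!S$ kills nilpotents and is not injective, and your generic-rank bookkeeping through that map collapses. Thus the ``careful change-of-rings argument'' you defer at the end --- including the hoped-for compatible $p$-bases, which is precisely what inseparability in $\bbk'/\bbk$ obstructs --- is not a technical detail but the entire content of the statement, and in the proposed form it cannot be carried out.

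The proof in the cited source avoids any comparison between ${}^e\!S$ and $\er\otimes_R S$ by cancelling the residue-field factor \emph{before} base change. Let $G_\bullet$ be the minimal free resolution of $\bbk$ over $R$, and let $F^e(G_\bullet)$ denote the complex obtained by raising every entry of the differentials to the $p^e$-th power. Then $\er\otimes_R G_\bullet$ is $F^e(G_\bullet)$ viewed as a complex of $R$-modules via $F^e$, and since restriction along $F^e$ multiplies lengths by $p^{e\alpha}$, one gets
\begin{equation*}
\beta_i(\er)=\dim_\bbk\Tor_i(\er,\bbk)=p^{e\alpha}\,\lambda_R\big(H_i(F^e(G_\bullet))\big),
\qquad\text{hence}\qquad
\beta_{i,R}^F(R)=\lim_{e\to\infty}\frac{\lambda_R\big(H_i(F^e(G_\bullet))\big)}{p^{de}}.
\end{equation*}
Every ingredient on the right-hand side is preserved by the hypotheses: $G_\bullet\otimes_R S$ is the minimal free resolution of $\bbk'$ (your first step), $F^e$ commutes with base change, homology commutes with flat base change, $\lambda_S(N\otimes_R S)=\lambda_R(N)$ for finite-length $N$ because $\bbk\otimes_R S=\bbk'$, and $\dim S=\dim R$ since the closed fiber is a field. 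This yields $\beta_{i,R}^F(R)=\beta_{i,S}^F(S)$ uniformly in the residue-field extension, with no $p$-bases and no reduction to the complete case; the ``in particular'' clause then follows exactly as you say, since $R\to\widehat{R}$ is flat with $\frakm\widehat{R}$ the maximal ideal.
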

\subsection{Finite CM type}
Let $(R,\frakm,\bbk)$ be either a local ring or a standard graded $\bbk$-algebra. A nonzero finitely generated $R$-module $M$ is called \emph{maximal Cohen-Macaulay module} (in short, MCM) if there exists a sequence of elements $x_1,\ldots,x_d$ that are $M$-regular and belong to the maximal ideal $\frakm$, where $d=\dim R$. The ring $R$ is said to have \emph{finite Cohen-Macaulay type} (in short, finite CM type), if there exists, up to isomorphism, only a finite number of indecomposable maximal Cohen-Macaulay (MCM) $R$-modules.

Here, we note two results that will be referred to multiple times.
\begin{theorem} Let $(R,\frakm,\bbk)$ be either a local ring or a standard graded $\bbk$-algebra.
	\begin{enumerate}
		\item \cite[Theorem 5]{AR89_CohenMacaulay_modules}, \cite[Main Theorem]{LWascent_of_finite} $R$ has finite CM type if and only if its completion $\widehat{R}$ has finite CM type.
		\item \cite[Theorem 10.1]{LeuschkeWiegandCMReprs2012} Assume $R$ be a $\bbk$-algebra and $\bbk\subset \bbk'$ is a field extension. If $R\otimes_\bbk \bbk'$ has finite CM type, then so has $R$.	
	\end{enumerate}
\end{theorem}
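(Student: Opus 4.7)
The plan is to adapt the proof of Eisenbud and Herzog to characteristic $p \neq 2$, working directly in the graded setting, rather than invoke the classification as a black box. Theorem 2.2(1) lets me pass freely between $R$ and its completion when convenient, and Theorem 2.2(2) lets me assume $\bbk$ is algebraically closed as needed.

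The central ingredient is Herzog's multiplicity bound: if $R$ is a CM graded ring of finite CM type which is not a hypersurface, then $R$ has minimal multiplicity, $e(R) = \mathrm{edim}(R) - \dim(R) + 1$. Its proof, via Auslander's $\delta$-invariant and MCM approximations, is characteristic-free. Minimal multiplicity forces $X := \mathrm{Proj}(R) \subset \mathbb{P}^{\mathrm{edim}(R)-1}$ to be a projective variety of minimal degree, and the Del Pezzo--Bertini classification (valid over any algebraically closed field) identifies $X$ as one of: a linear space, a smooth quadric, a (cone over a) rational normal scroll, or a (cone over the) Veronese surface $v_2(\mathbb{P}^2) \subset \mathbb{P}^5$. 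The first two possibilities are excluded by the hypotheses (they correspond to $R$ regular or a hypersurface).

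The remaining work is a case analysis, carried out in a characteristic-free manner except for the Veronese case. Scrolls of type $(m)$ are the rational normal curves $\bbk[x^m, x^{m-1}y, \ldots, y^m]$, which have finite CM type via the cyclic-quotient presentation $\bbk[[u,v]]^{\mathbb{Z}/m}$ (the modular case $p \mid m$ is handled directly by exhibiting the finite list of indecomposable MCM modules from the monomial structure). The scroll of type $(2,1)$ has finite CM type by explicit enumeration of indecomposables. All other scrolls---type $(a, b)$ with $b \geq 2$, or with three or more rows---admit an infinite family of pairwise non-isomorphic rank-one MCM modules built from line bundles on the scroll (Ulrich-type construction), so have infinite CM type; the same technique, applied to higher-dimensional cones over the admissible examples and over the Veronese surface, shows that no new examples arise in dimension $d \geq 4$. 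The Veronese case itself yields $R \cong \bbk[x,y,z]^{(2)} = \bbk[x^2, y^2, z^2, xy, xz, yz]$; in $\charact \bbk \neq 2$ this coincides with the invariant ring $\bbk[x,y,z]^{\mathbb{Z}/2}$ under the sign involution, and finite CM type follows from the Auslander correspondence for a linear action of a finite group of order coprime to $p$.

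The main obstacle is producing, in positive characteristic, the infinite families of indecomposable MCM modules over the non-admissible scrolls and over the cones of admissible ones. Following Eisenbud--Herzog, these arise from the rulings on the scroll and appropriate Serre twists of line bundles on $X$; the cohomological vanishings that drive the construction (existence of enough line bundles with trivial intermediate cohomology) are characteristic-free. The hypothesis $p \neq 2$ is genuinely used only in the Veronese case, for the identification of the second Veronese subalgebra with the sign-invariant ring and for the Auslander correspondence: in characteristic $2$ the sign action is trivial, the identification breaks, and the correspondence no longer applies.
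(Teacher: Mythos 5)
Your proposal does not address the statement under review. The statement is the ascent/descent theorem for finite CM type: part (1) asserts that $R$ has finite CM type if and only if its completion $\widehat{R}$ does, and part (2) asserts descent of finite CM type along a field extension $\bbk \subset \bbk'$. What you have written is instead a proof sketch of the classification result (Proposition \ref{prop:finite_CM_type}, carried out in Section \ref{sec:classification_cm_finite_type}): Herzog's minimal-multiplicity bound, the Del Pezzo--Bertini classification of varieties of minimal degree, and the case analysis of scrolls and the Veronese surface. Worse, your opening paragraph invokes ``Theorem 2.2(1)'' and ``Theorem 2.2(2)'' --- precisely the statement you were asked to prove --- as standing tools; read as a proof of that statement your argument is circular, and read on its own terms it proves a different theorem.

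Nothing in the proposal supplies the actual content of either part. For (1), the nontrivial direction is ascent from $R$ to $\widehat{R}$; this is the main theorem of \cite{LWascent_of_finite} (and, in the graded case, \cite[Theorem 5]{AR89_CohenMacaulay_modules}, comparing graded MCM modules with MCM modules over the completion), resting on the behavior of indecomposable MCM modules under the flat local map $R \to \widehat{R}$ and Krull--Schmidt uniqueness over the complete ring; descent uses that completion detects isomorphism of MCM modules and that every indecomposable MCM $\widehat{R}$-module is a direct summand of the completion of an MCM $R$-module. For (2), one needs the descent argument for the faithfully flat map $R \to R \otimes_\bbk \bbk'$: each indecomposable MCM $R$-module $M$ yields an MCM module $M \otimes_\bbk \bbk'$, and comparing direct-sum decompositions over the base and the extension bounds the number of isomorphism classes over $R$ by data over $R \otimes_\bbk \bbk'$; this is \cite[Theorem 10.1]{LeuschkeWiegandCMReprs2012}. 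Note that the paper itself gives no proof of this theorem --- it is stated with citations --- so the expected answer was either those references or a reconstruction of the arguments just indicated, not the classification you sketched; your sketch of the classification may well have value against Proposition \ref{prop:finite_CM_type}, but it cannot be credited here.
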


\subsection{Pick's theorem}
\label{thm:pick}
Consider a polygon in the plane with integer coordinates for each of its vertex. Let $n$ denote the number of integer points on the polygon's border, including vertices and sides, and $A$ denote its area. Then, the number of integer points within the polygon (including the boundary) is exactly $A+\frac{n}{2}+1$. This is known as Pick's theorem \cite{NZ67pick}.

\section{Cohen-Macaulay graded rings of finite CM type in prime characteristic}
\label{sec:classification_cm_finite_type}
\numberwithin{equation}{section}
In \cite{EisenbudHerzogFinCM1988} and \cite{YoshinoCMModules1990,LeuschkeWiegandCMReprs2012}, the
characterization of Cohen-Macaulay graded rings of finite CM type
is stated for fields of characteristic $0$. We want to observe below that
essentially those arguments will give the same characterization in prime
characteristic $p>2$. For the sake of completeness, we outline how the
proof proceeds in characteristic $0$ and make the necessary changes.

Let $\Bbbk$ be an algebraically closed field of characteristic $p \neq 2$.
By $R$ we mean a standard graded CM $\Bbbk$-algebra of finite CM type of dimension $d\geq 2$.

\begin{enumerate}
		\item
		$R$ has an isolated singularity
		(Auslander)~\cite[4.22]{YoshinoCMModules1990} or
		\cite[7.12]{LeuschkeWiegandCMReprs2012}.
		In particular $R$ is normal, so it is a domain.
		This proof does not require that $\charact \Bbbk = 0$.
		\item
		$R$ is stretched~\cite[Theorem~A]{EisenbudHerzogFinCM1988}, that is, the Hilbert series is of the form
		\begin{align}
			\label{eq:hilbert_series}
			\frac{1+ct+t^2 + \cdots + t^r}{(1-t)^d},
		\end{align}
		where $c = \rank_\Bbbk R_1 - \dim R$, and $r$ is the Castelnuovo-Mumford regularity of $R$.
		
		The proof in~\cite[Theorem~A]{EisenbudHerzogFinCM1988} goes through with the assumption that $\Bbbk$ is infinite. There is no need to make any assumptions on the characteristic of $\Bbbk$.
		\item If $R$ is Gorenstein, then the completion $\widehat{R}$ is either a power series ring or a hypersurface (Herzog) \cite[Theorem 9.15]{LeuschkeWiegandCMReprs2012}.
		\item If $R$ is not Gorenstein, then its Hilbert series is $\frac{1+ct}{(1-t)^d}$. In particular, $R$ has minimum multiplicity. This is~\cite[Theorem~B]{EisenbudHerzogFinCM1988}, whose proof uses a `general position' argument from~\cite{ACGH85} in characteristic $0$. But one can avoid this as in~\cite[17.2]{YoshinoCMModules1990}. 
			
			More precisely, use a theorem of Stanley~\cite[Theorem 4.4.9]{BH93} to the numerator of the Hilbert series of $R$ in (\ref{eq:hilbert_series}) to deduce that $c=1$ or $r
			\leq 2$. If $c=1$ or $r = 2$, the numerator of the Hilbert series
			is symmetric, so by another theorem of Stanley~\cite[Corollary 4.4.6]{BH93}, $R$
			is Gorenstein. Hence $c > 1$ and $r = 1$.
			
			Thus, we conclude that $\Proj R$ is a smooth variety of minimum degree, and
			not a hypersurface, with no assumption on $\charact \Bbbk$.
			These are classified (in all characteristics)
			by~\cite{EisenbudHarrisMinDeg87}:
			scrolls and the Veronese surface $\projective^2_\Bbbk \hookrightarrow
			\projective^5_\Bbbk$.
			
			\item
			Of the scrolls, the only ones with finite CM type are scroll of types $(m)$, $(1,1 )$, and
			$(2,1)$, since $\Bbbk$ is
			infinite~\cite[Theorem~3.2,~p.~9]{AR89cmtypeofcmrings}. Of these, scroll of type $(1,1)$ is the quadric hypersurface.
			If $\charact \Bbbk  \neq 2$, then 
			the coordinate ring of the Veronese surface 
			$\projective^2_\Bbbk \hookrightarrow \projective^5_\Bbbk$
			is isomorphic to is the invariant ring $\Bbbk[x_1,x_2,x_3]^{\ints/2\ints}$, and it is of finite CM
			type~\cite[Theorem~4.1,~p.~17]{AR89cmtypeofcmrings}.
		\end{enumerate}
		
The preceding discussions complete the proof of Proposition \ref{prop:finite_CM_type}.

\section{$R=\bbk\!\left[x^\delta,x^{\delta-1}y,\ldots,y^\delta\right],\delta \geq 2$}
\label{sec:scroll_m}
\numberwithin{equation}{section}
Consider a polynomial ring $S=\mathbb{k}[x,y]$ over a field $\mathbb{k}$. Let $R$ be the homogeneous coordinate ring of the rational normal curve of degree $\delta \geq 2$, that is,
$R=\bigoplus_{n\geq 0}S_{n\delta}$, which is same as the $\bbk$-algebra $\bbk\!\left[x^\delta,x^{\delta-1}y,\ldots,y^\delta\right]$.

We state some known facts (see \cite[Section 1]{KS15cone_of_betti}, for instance).
The indecomposable maximal Cohen-Macaulay modules are exactly of the form:
\begin{align*}
	M^{(l)}=\bigoplus_{n\geq 0}S_{n\delta+l},
\end{align*}
for  $l=0,\ldots,\delta-1$. Fix an $l$ such that $1\leq l \leq \delta-1$. Note that $M^{(l)}=R\left<x^l,x^{l-1}y,\ldots,y^l\right>$, and $\left(M^{(\delta-1)}\right)^l$ is the first syzygy of $M^{(l)}$, that is,
\begin{align}
	\label{ex:mdel}
	0 \too \left(M^{(\delta-1)}\right)^l \too R^{l+1} \too M^{(l)} \too 0.
\end{align}
\begin{lemma}
	Assume the setup of this section. Then the $i$-th Betti number of $M^{(l)}$ for each $1\leq l \leq \delta-1$ is
	\begin{align*}
		\beta_{i}(M^{(l)})&=\begin{cases}
			l+1& \text{if } i=0\\
			\delta(\delta-1)^{i-1}l& \text{else}
		\end{cases}.
	\end{align*}
\end{lemma}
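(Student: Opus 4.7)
The plan is to combine the displayed short exact sequence (\ref{ex:mdel}) with the long exact sequence of $\Tor$ to turn the Betti-number computation into a one-term linear recursion, first for the special module $M^{(\delta-1)}$ and then for general $M^{(l)}$.

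The base case $\beta_0(M^{(l)}) = l+1$ I would settle first: as an $R$-module, $M^{(l)}$ is generated by the $l+1$ monomials $x^l, x^{l-1}y, \ldots, y^l$, all of $S$-degree $l < \delta$, so they are linearly independent over $\bbk$ modulo $\mathfrak{m} M^{(l)}$ (which lives in $S$-degrees $\geq \delta+l$). Hence they form a minimal generating set.

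Next I would feed (\ref{ex:mdel}) into the long exact sequence
\begin{align*}
\cdots \to \Tor_i(R^{l+1},\bbk) \to \Tor_i(M^{(l)},\bbk) \to \Tor_{i-1}\bigl((M^{(\delta-1)})^l,\bbk\bigr) \to \Tor_{i-1}(R^{l+1},\bbk) \to \cdots
\end{align*}
For $i \geq 2$ the outer terms vanish since $R^{l+1}$ is free, giving
\[
\beta_i(M^{(l)}) \;=\; \beta_{i-1}\bigl((M^{(\delta-1)})^l\bigr) \;=\; l\cdot \beta_{i-1}(M^{(\delta-1)}).
\]
For $i=1$ the tail of the long exact sequence is $0 \to \Tor_1(M^{(l)},\bbk) \to \bbk^{l\delta} \to \bbk^{l+1} \to \bbk^{l+1} \to 0$, where the last map is an isomorphism (same rank, surjective), forcing the middle map to be zero and yielding $\beta_1(M^{(l)}) = l\delta = l\cdot\beta_0(M^{(\delta-1)})$. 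So the recursion $\beta_i(M^{(l)}) = l\cdot\beta_{i-1}(M^{(\delta-1)})$ holds for all $i\geq 1$. (As a bonus, the fact that the middle map is zero is precisely the statement that (\ref{ex:mdel}) is part of a minimal free resolution, which is the only potential subtlety in the argument.)

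Finally, specializing the recursion to $l = \delta-1$ gives the one-variable recurrence $\beta_i(M^{(\delta-1)}) = (\delta-1)\beta_{i-1}(M^{(\delta-1)})$ with initial value $\beta_0(M^{(\delta-1)}) = \delta$, whose closed form is $\beta_i(M^{(\delta-1)}) = \delta(\delta-1)^i$. Substituting back into the general recursion gives $\beta_i(M^{(l)}) = l\cdot \delta(\delta-1)^{i-1}$ for $i\geq 1$, matching the claim. The entire argument is a bookkeeping exercise built on (\ref{ex:mdel}); the only place one has to be mildly careful is in verifying minimality of the map $R^{l+1}\to M^{(l)}$, but this is automatic from the equality $l+1 = \mu(M^{(l)})$ computed in the first step.
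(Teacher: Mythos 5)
Your proposal is correct and follows essentially the same route as the paper: feed the short exact sequence (\ref{ex:mdel}) into the long exact sequence of $\Tor$, use $\mu(M^{(l)})=l+1$ to see that the map $\bigl(M^{(\delta-1)}\bigr)^l\otimes\bbk\to\bbk^{l+1}$ vanishes (so $\beta_{i}(M^{(l)})=l\,\beta_{i-1}(M^{(\delta-1)})$ for all $i\geq 1$), then specialize to $l=\delta-1$ to solve the recursion $\beta_i(M^{(\delta-1)})=\delta(\delta-1)^i$. Your explicit graded justification of $\beta_0(M^{(l)})=l+1$ is a detail the paper takes as known, but it is accurate and the two arguments are otherwise identical.
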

\begin{proof}
The short exact sequence in (\ref{ex:mdel}) gives an exact sequence
\begin{align*}
0\to \Tor_1(M^{(l)},\bbk)\to \left(M^{(\delta-1)}\right)^l\otimes \bbk \to \bbk^{l+1}\to M^{(l)}\otimes \bbk \to 0,\\
\text{and }\Tor_{i+1}\left(M^{(l)}, \bbk \right)= \Tor_{i}\left(\left(M^{(\delta-1)}\right)^l,\bbk \right) \text{ for all } i\geq 1.
\end{align*}
Since $\mu(M^{(l)})=l+1$, so $\dim_{\bbk}\Tor_1(M^{(l)},\bbk)=\dim_{\bbk} \left(M^{(\delta-1)}\right)^l\otimes \bbk$. Thus, $\dim_{\bbk}\Tor_{i+1}(M^{(l)},\bbk)=\dim_{\bbk}\Tor_i \left((M^{(\delta-1)})^l, \bbk\right)$ for all $i\geq 0$. That is,
\begin{align*}
	\beta_{i+1}(M^{(l)})=l\beta_i(M^{(\delta-1)}) \text{ for all }i\geq 0.
\end{align*}
In particular, $\beta_{i+1}(M^{(\delta-1)})=(\delta-1)\beta_i(M^{(\delta-1)})$ for all $i\geq 0$, and consequently $\beta_i(M^{(\delta-1)})=\delta(\delta-1)^{i}$ for all $i\geq 0$. Hence the result.
\end{proof}
\begin{discussion}
Let $e_1,\ldots,e_{l+1}$ be the natural basis of $R^{l+1}$ which map to $x^l,x^{l-1}y,\ldots,y^l$ of $M^{(l)}$ respectively in the exact sequence (\ref{ex:mdel}). More precisely, the graded setup of (\ref{ex:mdel}) is as follows:
\begin{align}
	\label{ex:mdel_graded}
	0 \too \left(M^{(\delta-1)}(-l-1)\right)^l \too R(-l)^{l+1} \too M^{(l)} \too 0.
\end{align}	
The Hilbert series of $M^{(l)}$ is $H(M^{(l)},t)=\sum_{k\geq 0} (k\delta+l+1)t^{k\delta+l}$. In particular, the Hilbert series of $M^{(\delta-1)}$ is $H(M^{(\delta-1)},t)=\sum_{k\geq 0} (k+1)\delta t^{(k+1)\delta-1}$. Thus, from (\ref{ex:mdel_graded}) the Hilbert series of $\operatorname{Syz}^1_R\left(M^{(l)}\right)$ is $$H\left(\operatorname{Syz}^1_R\left(M^{(l)}\right),t\right)=lt^{l+1}H\left(M^{(\delta-1)},t \right)=\sum_{k\geq 0} l(k+1)\delta t^{(k+1)\delta+l}.$$
In particular, $\operatorname{Syz}^1_R\left(M^{(l)}\right)$ is minimally generated by $\delta l$ elements.

Also, note that $x^{\delta-k}y^ke_m-x^{\delta-k+1}y^{k-1}e_{m+1}\in \operatorname{Syz}^1_R\left(M^{(l)}\right)$ for all $1\leq k\leq \delta$ and $1\leq m\leq l$. The set $\left\{ x^{\delta-k}y^ke_m-x^{\delta-k+1}y^{k-1}e_{m+1} \mid 1\leq k\leq \delta \text{ and }1\leq m\leq l \right\}$ is $\bbk$-linearly independent, as its elements are distinct and lives in the same degree in $R^{l+1}$. The $R$-module generated by the aforementioned set is minimally generated by $\delta l$ elements and forms a graded submodule of $\operatorname{Syz}^1_R\left(M^{(l)}\right)$, with the same Hilbert series. Hence,
$$\operatorname{Syz}^1_R\left(M^{(l)}\right)=R\left< \left\{ x^{\delta-k}y^ke_m-x^{\delta-k+1}y^{k-1}e_{m+1} \mid 1\leq k\leq \delta \text{ and }1\leq m\leq l \right\} \right>.$$
\end{discussion}
\begin{theorem}\label{thm:rational_normal_curve}
		Assume the setup of this section and $\bbk$ is an algebraically closed field of prime characteristic $p$. Then the $F$-signature, Hilbert-Kunz multiplicity, and the Frobenius Betti numbers of $R$ are as follows:
	\begin{eqnarray*}
		s(R)=\dfrac{1}{\delta},&\ehk(R)=\dfrac{\delta+1}{2},&\text{and } \beta_{i}^F(R)=\dfrac{\delta(\delta-1)^i}{2} \text{ for all } i\geq 1.
	\end{eqnarray*}
\end{theorem}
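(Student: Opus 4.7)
The plan is to write $\er$ explicitly as a direct sum of the indecomposable MCM modules $M^{(l)}$ with multiplicities $c_{e,l}$ that I can count, and then read off all three invariants from the preceding lemma. Setting $q = p^e$ and $X = x^{1/q}$, $Y = y^{1/q}$, I identify $\er \cong R^{1/q}$, realised as the subring of $\bbk[X, Y]$ consisting of polynomials all of whose monomials have total $X,Y$-degree divisible by $\delta$. The ambient $S^{1/q} = \bbk[X, Y]$ is free over $S = \bbk[x,y]$ of rank $q^2$ on the monomial basis $\{X^{A_0} Y^{B_0} : 0 \le A_0, B_0 < q\}$, and combining this with the $R$-module decomposition $S = \bigoplus_{l=0}^{\delta-1} M^{(l)}$ gives
\begin{equation*}
S^{1/q} \;=\; \bigoplus_{\substack{0 \le A_0, B_0 < q \\ 0 \le l \le \delta - 1}} X^{A_0} Y^{B_0}\, M^{(l)}
\end{equation*}
as $R$-modules, where each $X^{A_0} Y^{B_0} M^{(l)}$ is isomorphic to $M^{(l)}$ up to a grading shift. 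The monomials in the $(A_0, B_0, l)$-th summand all have total $X, Y$-degree congruent to $A_0 + B_0 + q l$ modulo $\delta$, so intersecting with $R^{1/q}$ selects exactly the summands with $A_0 + B_0 + q l \equiv 0 \pmod \delta$, producing
\begin{equation*}
\er \;\cong\; \bigoplus_{l = 0}^{\delta - 1} \bigl(M^{(l)}\bigr)^{c_{e, l}}, \qquad c_{e, l} \;=\; \#\bigl\{ (A_0, B_0) \in [0, q)^2 : A_0 + B_0 \equiv -q l \pmod \delta \bigr\}.
\end{equation*}

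The next step is a lattice-point count showing $c_{e, l}/q^2 \to 1/\delta$ as $e \to \infty$, for every $l$. For any residue $t$ modulo $\delta$, the $q^2$ integer points in $[0, q)^2$ split across the $\delta$ residue classes of $A_0 + B_0 \bmod \delta$ uniformly up to a boundary error of order $O(q)$ (enumerate by the antidiagonal $A_0 + B_0 = k$, whose lattice count is at most $q$, and note that there are $\lfloor (2q-1)/\delta\rfloor + O(1)$ admissible antidiagonals in each residue class), so each class contains $q^2/\delta + O(q)$ pairs.

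Finally, since Betti numbers are additive on direct sums and invariant under graded shifts, $\beta_i(\er) = \sum_{l} c_{e, l}\, \beta_i(M^{(l)})$. Substituting $\beta_0(M^{(l)}) = l + 1$ and $\beta_i(M^{(l)}) = \delta(\delta - 1)^{i - 1} l$ for $i \ge 1$ from the preceding lemma, dividing by $p^{(\alpha + d) e} = q^2$ and taking the limit yields simultaneously $s(R) = 1/\delta$, $\ehk(R) = (\delta + 1)/2$, and $\beta_i^F(R) = \delta(\delta - 1)^i/2$. The main obstacle is the decomposition in the first step: one must verify that each $R$-submodule $X^{A_0} Y^{B_0} M^{(l)}$ of $S^{1/q}$ is genuinely isomorphic to $M^{(l)}$ (which follows from multiplication by the monomial unit $X^{A_0} Y^{B_0}$ in the domain $S^{1/q}$) and that the described sum exhausts $R^{1/q}$ exactly; once this structural decomposition is in place, the rest reduces to the elementary counting above and substitution into the additive Betti number formula.
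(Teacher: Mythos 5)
Your proposal is correct, and its skeleton is the paper's: decompose $R^{\frac{1}{q}}$ into copies of the indecomposable MCM modules $M^{(l)}$ indexed by lattice points in $[0,q)^2$ subject to a congruence mod $\delta$, count those points asymptotically as $\frac{q^2}{\delta}+\mathcal{O}(q)$, and feed $\beta_0(M^{(l)})=l+1$, $\beta_i(M^{(l)})=\delta(\delta-1)^{i-1}l$ into the additive Betti number formula. Indeed your summands are \emph{literally} the paper's: under $i=A_0+lq$, $j=B_0$, the submodule $X^{A_0}Y^{B_0}M^{(l)}$ is exactly $N^l_{i,j}$ with $(i,j)\in\calP(l)$, and your congruence $A_0+B_0\equiv -ql \pmod{\delta}$ is the condition $i+j\in\delta\ints$. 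Where you genuinely differ is in how the decomposition and the isomorphisms with $M^{(l)}$ are established. The paper defines $N^l_{i,j}$ by explicit generators, uses the syzygy generators of $M^{(l)}$ worked out in the Discussion preceding the theorem to produce a surjection $M^{(l)}\to N^l_{i,j}$, and then compares graded $\bbk$-dimensions to upgrade it to an isomorphism. You instead obtain the whole decomposition structurally from the free $S$-basis $\left\{X^{A_0}Y^{B_0} : 0\leq A_0,B_0<q\right\}$ of $S^{\frac{1}{q}}$ together with $S=\oplus_{l=0}^{\delta-1}M^{(l)}$ as $R$-modules, so that each summand is isomorphic to $M^{(l)}$ simply by multiplication by a monomial in the domain $S^{\frac{1}{q}}$, and $R^{\frac{1}{q}}$ is carved out by the residue class of total degree mod $\delta$ (legitimate, since each summand is spanned by monomials of a single residue class). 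This buys a cleaner proof: no syzygy computation and no Hilbert-series comparison are needed, and the paper's Discussion identifying $\operatorname{Syz}^1_R(M^{(l)})$ becomes dispensable for this theorem. Two minor points, both at the same level of informality as the paper itself: your antidiagonal sketch as literally stated (at most $q$ points per antidiagonal, $\lfloor (2q-1)/\delta\rfloor+\mathcal{O}(1)$ antidiagonals per class) only yields an upper bound of order $\frac{2q^2}{\delta}$; to get $\frac{q^2}{\delta}+\mathcal{O}(q)$ one should add that consecutive antidiagonal counts differ by at most $1$, so the $\delta$ classes are equidistributed up to $\mathcal{O}(q)$. And reading off $s(R)$ from $c_{e,0}$ tacitly uses Krull--Schmidt together with the fact that $M^{(l)}$, $l\geq 1$, is indecomposable and non-free, so no further free summands can appear --- the paper makes the same tacit use when it asserts the number of free summands is $a_0$.
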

\begin{proof}
Note that a monomial $x^iy^j\in R$ if and only if $i+j\in \delta \mathbb{Z}$. Let $q=p^e>\delta$. Then as an $R$-module, $R^{\frac{1}{q}}$ is generated by
\begin{align*}
	\left\{ x^{\frac{i}{q}}y^{\frac{j}{q}} \mid 0\leq i,j< \delta q,  i+j\in \delta\mathbb{Z}, \text{ and whenever } (r-1)q\leq i<rq, \text{ then } 0 \leq j<(\delta-r+1)q  \right\}.
\end{align*}
For each $0\leq l\leq \delta-1$, consider the set of tuples in $\mathbb{Z}^2$:
\begin{align*}
	\calP(l)&:= \left\{ (i,j)\mid lq\leq i<(l+1)q, 0\leq j<q,\text{ and } i+j\in \delta\mathbb{Z} \right\}.
\end{align*}
For each $(i,j)\in \calP(l)$, define an $R$-submodule $N^l_{i,j}$ of $R^{\frac{1}{q}}$ as follows:
\begin{align*}
	N^{l}_{i,j}&:=R\left< \left\{ x^{\frac{i}{q}}y^{\frac{j}{q}},x^{\frac{i-q}{q}}y^{\frac{j+q}{q}},\ldots,x^{\frac{i-lq}{q}}y^{\frac{j+lq}{q}}  \right\} \right>.
\end{align*}
We can consider $R^{\frac{1}{q}}$ as a vector space over the field $\bbk$. We can also assign  $(\mathbb{Z}/q,\mathbb{Z}/q)$ degree to it and treat it as the usual monomial grading. Then, as a $\bbk$-vector space,
\begin{align*}
	N^l_{i,j}&=\oplus_{k\geq 0} \bbk\left<\left\{  x^{\frac{i+k\delta q}{q}}y^{\frac{j}{q}},x^{\frac{i+(k\delta-1)q}{q}}y^{\frac{j+q}{q}},\ldots,x^{\frac{i-lq}{q}}y^{\frac{j+(k\delta +l)q}{q}} \right\}  \right>.
\end{align*}
Note that
\begin{equation}
	\label{eq:dim_k_N_lij}
	\dim_{\bbk}\left[N^l_{i,j}\right]_{\frac{i+j}{q}+k\delta}=k\delta +l +1=\dim_{\bbk}\left[M^{(l)}\right]_{k\delta+l}.
\end{equation}

Now consider the natural map $\pi:R^{l+1}\to N^l_{i,j}$ that maps a basis element $e_m$ to  $x^{\frac{i-(m-1)q}{q}}y^{\frac{j+(m-1)q}{q}},$ for all $1\leq m\leq l+1$. Then $\pi (x^{\delta-k}y^ke_{m}-x^{\delta-k+1}y^{k-1}e_{m+1})=0$ for all $1\leq k\leq \delta$ and $1\leq m \leq l$.
That is, $\pi(\operatorname{Syz}_R^1(M^{{(l)}}))=0$.
Therefore, \( \pi \) induces a surjection \( M^{(l)} \to N^l_{i,j} \) that preserves the \( \bbk \)-grading. Thus, by applying (\ref{eq:dim_k_N_lij}), we obtain \( M^{(l)} \simeq N^l_{i,j} \).

As a $\bbk$-vector space
\begin{align*}
	R^{\frac{1}{q}}=\oplus_{0\leq l\leq \delta -1}\left( \oplus_{(i,j)\in \calP(l)}\left(
	\oplus_{k\geq 0} \bbk\left<\left\{  x^{\frac{i+k\delta q}{q}}y^{\frac{j}{q}},x^{\frac{i+(k\delta-1)q}{q}}y^{\frac{j+q}{q}},\ldots,x^{\frac{i-lq}{q}}y^{\frac{j+(k\delta +l)q}{q}} \right\}  \right>\right)\right).
\end{align*}
That is, $R^{\frac{1}{q}}=\oplus_{0\leq l\leq \delta -1}\left( \oplus_{(i,j)\in \calP(l)}\left( N^l_{i,j} \right)\right)$ as a $\bbk$-vector space, and consequently also as an $R$-module. Let $a_l$ be the cardinality of the set $\calP(l)$. Then $R^{\frac{1}{q}}\simeq \oplus_{0\leq l \leq \delta-1} \left(M^{(l)} \right)^{a_l}$ as an $R$-module. Hence, $\beta_{i}(R^{\frac{1}{q}})=\sum_{l=0}^{\delta-1}a_l\beta_{i}(M^{(l)})$ for all $i\geq 0$.

\noindent It remains to compute $a_l$. Note that
\begin{align*}
	\left\{(i,j)\mid lq\leq i<(l+1)q, 0\leq j<q\right\}&=\sqcup_{r=0}^{\delta-1}\{(i,j)\mid lq\leq i<(l+1)q, 0\leq j<q,\\
	&\hspace{4cm} \text{ and } i+j\equiv r\!\mod \delta \},
\end{align*}
where $\sqcup$ stands for a disjoint union.
The left hand side has cardinality $q^2$. Thus,
\begin{align*}
\#\left\{(i,j)\mid lq\leq i<(l+1)q, 0\leq j<q,\text{ and } i+j\equiv 0\!\mod \delta \right\}&=\frac{p^{2e}}{\delta}+\mathcal{O}(p^{2e-1}).
\end{align*}
That is,
\begin{align*}
	a_l=\#\calP(l)&=\frac{p^{2e}}{\delta}+\mathcal{O}(p^{2e-1}).
\end{align*}
The number of free summands of $R^{\frac{1}{q}}$ is $a_0$. Thus, the $F$-signature is $\lim_{e\to \infty} \frac{a_0}{p^{2e}}=\frac{1}{\delta}$. The $i$-th Frobenius Betti number is
\begin{align*}
	\beta_i^F(R)&=\lim_{e\to \infty} \frac{\sum_{l=0}^{\delta-1} a_l\beta_{i}(M^{(l)})}{p^{2e}}\\
	&=\begin{cases}
		\lim_{e\to \infty} \frac{\sum_{l=0}^{\delta-1} a_l(l+1)}{p^{2e}} & \text{ if }i= 0\\
		\lim_{e\to \infty} \frac{\sum_{l=0}^{\delta-1} a_l\delta(\delta-1)^{i-1}l}{p^{2e}} & \text{ if }i\geq 1
	\end{cases}\\
	&=\begin{cases}
		\frac{1}{\delta} \sum_{l=0}^{\delta-1} l+1& \text{ if }i= 0\\
		\frac{1}{\delta} \sum_{l=0}^{\delta-1} \delta(\delta-1)^{i-1}l
		 & \text{ if }i\geq 1
	\end{cases}\\
		&=\begin{cases}
		\frac{\delta+1}{2}
		& \text{ if }i= 0\\
		\frac{\delta(\delta-1)^{i}}{2}
		& \text{ if }i\geq 1
	\end{cases}.
\end{align*}
\end{proof}

\section{$R= \mathbb{k}[x^2,xy,y^2,xz,yz]$}
\label{sec:scroll_21}
\numberwithin{equation}{section}
Let $R= \mathbb{k}[x^2,xy,y^2,xz,yz]$. Since the Frobenius Betti numbers do not change under completion and flat extension, we may assume $R= \mathbb{k}[\![x^2,xy,y^2,xz,yz]\!]$, and $\bbk$ is an algebraically closed field of prime characteristic $p$. The list of indecomposable MCM modules and the exact sequences stated below are explained in the proof of \cite[Theorem 2.1]{AR89cmtypeofcmrings}. Let $A =(x^2,xy)$, $B=(x^2,xy,xz)$, $C=(x^2,xy,y^2)$.
Let $D$ be the first syzygy of $B$, that is,
\begin{align}
	\label{eq:kr3b}
	0\to D\to R^3 \to B \to 0.
\end{align}
Note that
\begin{align*}
	D&=R\left<-xze_1+x^2e_3, -yze_1+xye_3,-xze_2+xye_3,-yze_2+y^2e_3,-xye_1+x^2e_2,-y^2e_1+xye_2 \right>,
\end{align*}
where $e_1,e_2,e_3$ are natural basis elements of $R^3$ that map to $x^2,xy,xz$, respectively. The set of indecomposable maximal Cohen-Macaulay modules of $R$ is precisely $\{R,A,B,C,D \}$. Note that $A$ is the canonical module of $R$, and $B$ is the first syzygy of $A$.
\begin{align}
	\label{eq:br2a}
	0\to B \to R^2 \to A \to 0.
\end{align}
Dualizing the above
\begin{align}
	\label{eq:ra2c}
	0\to R \to A^2 \to C \to 0.
\end{align}
We also have an exact sequence
\begin{align}
	\label{eq:ra2bk}
	0\to R \to A^2\oplus B \to D \to 0.
\end{align}

\begin{lemma}
	Assume the setup of this section. Then we have the following Betti numbers for the MCM modules:
\begin{center}
	\begin{tabular}{cccc}
		$\beta_{i}(A)=\begin{cases}
			2 &\text{if } i=0,\\
			3\times 2^{i-1} &\text{for all } i\geq 1
		\end{cases},$&and
		&$\frac{\beta_i(D)}{2}=
		\beta_i(B)=\beta_i(C)=3\times 2^i$ for all $i\geq 0$.
	\end{tabular}
\end{center}

\end{lemma}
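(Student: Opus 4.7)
My plan is to use the four short exact sequences~(\ref{eq:kr3b})--(\ref{eq:ra2bk}) to express the Betti numbers of $A$, $B$, and $C$ in terms of those of $D$, and then obtain a closed-form recurrence for $\beta_i(D)$. The key initial observation is that the surjections $R^2 \to A$ in~(\ref{eq:br2a}) and $R^3 \to B$ in~(\ref{eq:kr3b}) are minimal presentations, since their basis vectors map to the minimal generating sets $\{x^2,xy\}$ and $\{x^2,xy,xz\}$ of the targets (these are linearly independent in $R_1$). Splicing these minimal presentations into a minimal free resolution yields the shift identities
\[
\beta_i(A) = \beta_{i-1}(B) \quad \text{and} \quad \beta_i(B) = \beta_{i-1}(D), \qquad i \geq 1,
\]
together with $\beta_0(A)=2$ and $\beta_0(B)=3$. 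In particular $\beta_i(A) = \beta_{i-2}(D)$ for $i\geq 2$, so once $\beta_\bullet(D)$ is in hand, the Betti numbers of $A$ and $B$ follow immediately.

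Next, I would apply the long exact sequence of $\Tor(-,\bbk)$ to~(\ref{eq:ra2c}) and~(\ref{eq:ra2bk}), using that $R$ is free over itself to kill $\Tor_i(R,\bbk)$ for $i\geq 1$. For $i\geq 2$ this gives $\beta_i(C)=2\beta_i(A)$ and $\beta_i(D)=2\beta_i(A)+\beta_i(B)$. Substituting the shift relations into the latter produces the linear recurrence
\[
\beta_i(D) = \beta_{i-1}(D) + 2\beta_{i-2}(D), \qquad i\geq 2,
\]
whose characteristic polynomial $x^2-x-2=(x-2)(x+1)$, combined with the initial data $\beta_0(D)=6$ and $\beta_1(D)=12$, forces $\beta_i(D)=6\cdot 2^i$ (the $(-1)^i$ coefficient vanishes). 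Back-substitution gives $\beta_i(B)=3\cdot 2^i$ and $\beta_i(A)=3\cdot 2^{i-1}$ for $i\geq 1$, and the $C$-formula follows from $\beta_i(C)=2\beta_i(A)=3\cdot 2^i$ for $i\geq 1$, with $\beta_0(C)=3$.

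The step I expect to be most delicate is pinning down the initial values $\beta_1(C)=6$ and $\beta_1(D)=12$, because at $i=1$ the long exact sequence carries a nontrivial tail ending in $\Tor_0(R,\bbk) \to \Tor_0(A^2,\bbk) \to \Tor_0(C,\bbk)\to 0$ (and analogously for $D$). The resolution is to exploit the numerical comparisons $\mu(C)=3<4=\mu(A^2)$ and $\mu(D)=6<7=\mu(A^2\oplus B)$: right-exactness of $-\otimes_R\bbk$ makes the rightmost map $\bbk^4\twoheadrightarrow \bbk^3$ (respectively $\bbk^7\twoheadrightarrow\bbk^6$) surjective with one-dimensional kernel, and by exactness this kernel equals the image of the connecting map from $\Tor_0(R,\bbk)=\bbk$, which must therefore be injective. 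Exactness then forces the preceding map $\Tor_1(C,\bbk)\to\bbk$ (respectively $\Tor_1(D,\bbk)\to\bbk$) to be zero, yielding $\beta_1(C)=2\beta_1(A)=6$ and $\beta_1(D)=2\beta_1(A)+\beta_1(B)=12$, which supplies the initial data needed for the recurrence and completes the argument.
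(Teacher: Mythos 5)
Your proposal is correct and follows essentially the same route as the paper: the same four short exact sequences yield the shift identities $\beta_{i+1}(A)=\beta_i(B)$, $\beta_{i+1}(B)=\beta_i(D)$ and the relations $\beta_i(C)=2\beta_i(A)$, $\beta_i(D)=2\beta_i(A)+\beta_i(B)$ for $i\geq 1$ (your connecting-map argument at $i=1$ is just a rephrasing of the paper's alternating-sum dimension count using $\mu(C)=3$, $\mu(A^2)=4$, $\mu(D)=6$, $\mu(A^2\oplus B)=7$), leading to the same recurrence $\beta_i(D)=\beta_{i-1}(D)+2\beta_{i-2}(D)$ with initial data $\beta_0(D)=6$, $\beta_1(D)=12$. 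The only cosmetic difference is that you solve the recurrence via its characteristic polynomial $(x-2)(x+1)$, whereas the paper telescopes the identity $2\left(\beta_i(D)+\beta_{i+1}(D)\right)=\beta_{i+1}(D)+\beta_{i+2}(D)$; both give $\beta_i(D)=3\times 2^{i+1}$ and the stated formulas.
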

\begin{proof}
The short exact sequence in (\ref{eq:kr3b}) gives an exact sequence $0\to \Tor_1(B,\bbk)\to D\otimes\bbk \to \bbk^3 \to B\otimes \bbk \to 0$ and $\Tor_{i+1}(B,\bbk)=\Tor_i(D,\bbk)$ for all $i\geq 1$. Since $\mu(B)=3$, so $\dim_\bbk \Tor_1(B,\mathbb{k})=\dim_\bbk D\otimes\mathbb{k}$. Thus, $\dim_\bbk \Tor_{i+1}(B,\mathbb{k})=\dim_\bbk  \Tor _{i}(D,\mathbb{k})$ for all $i\geq 0$. That is,
\begin{align}
	\label{eq:beta_iKB}
	\beta_{i+1}(B)&=\beta_{i}(D) \quad \text{for all } i\geq 0.
\end{align}
In the same way, the short exact sequence in (\ref{eq:br2a}) yields
\begin{align}
	\label{eq:beta_iBA}
	\beta_{i+1}(A)&=\beta_{i}(B) \quad \text{for all } i \geq 0.
\end{align}
The short exact sequence in (\ref{eq:ra2c}) gives an exact sequence $0\to \Tor_1(A^2,\bbk) \to \Tor_1(C,\bbk)\to \bbk \to A^2\otimes\bbk \to  C\otimes \bbk \to 0$ and $\Tor_{i}(A^2,\bbk)=\Tor_i(C,\bbk)$ for all $i\geq 2$. Since $\mu(A^2)=4$ and $\mu(C)=3$, so $\dim_\bbk \Tor_1(A^2,\bbk)=\dim_\bbk \Tor_1(C,\bbk)$. Thus, $\dim_\bbk \Tor_i(A^2,\bbk)=\dim_\bbk \Tor_i(C,\bbk)$ for all $i\geq 1$. That is,
\begin{align}
	\label{eq:2beta_iAC}
	2\beta_{i}(A)&=\beta_{i}(C) \quad \text{for all } i\geq 1.
\end{align}
In the same way, the short exact sequence in (\ref{eq:ra2bk}) yields
\begin{align}
	\label{eq:2beta_iABK}
	2\beta_{i}(A)+\beta_{i}(B)&=\beta_{i}(D) \quad \text{for all } i \geq 1.
\end{align}
By combining (\ref{eq:beta_iKB}),(\ref{eq:beta_iBA}), and (\ref{eq:2beta_iABK}) together, we obtain
 $\beta_{i}(D)=2\beta_{i-2}(D)+\beta_{i-1}(D)$ for all $i\geq 2$. That is, $2\left( \beta_{i}(D)+\beta_{i+1}(D)\right)=\beta_{i+1}(D)+\beta_{i+2}(D)$ for all $i\geq 0$. Consequently $\beta_{i+1}(D)+\beta_{i+2}(D)=2^{i+1}\left(\beta_{0}(D)+\beta_1(D) \right)$ for all $i\geq 0$.
 
 \noindent
 Now $\beta_{0}(K)=6$, $\beta_1(A)=\beta_{0}(B)=3$, and $\beta_1(B)=\beta_{0}(D)=6$. Hence, $\beta_1(D)=2\beta_{1}(A)+\beta_1(B)=12$. Consequently, $\beta_{i+2}(D)=2^{i+1}\times 18 - \beta_{i+1}(D)$ for all $i\geq 0$. Hence, $\beta_i(D)=3\times 2^{i+1}$ for all $i\geq 0$. Therefore, $\beta_i(A)=3\times 2^{i-1}$ for all $i\geq 1$ and $\beta_i(B)=\beta_i(C)=3\times2^i$ for all $i\geq 0$.
\end{proof}
\begin{theorem}
	\label{thm:scroll21}
	Assume the setup of this section. Then the $F$-signature, Hilbert-Kunz multiplicity, and the Frobenius Betti numbers of $R$ are as follows:
\begin{eqnarray*}
		s(R)=\dfrac{5}{12}, & \ehk(R)= \dfrac{7}{4}, & \text{ and } \beta_i^F(R)= \dfrac{9}{4}\times 2^{i-1} \text{ for all } i\geq 1.
\end{eqnarray*}
	
\end{theorem}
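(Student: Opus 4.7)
The plan is to follow the strategy of Theorem \ref{thm:rational_normal_curve}: produce an explicit direct-sum decomposition of $\er$ into copies of the five indecomposable MCM modules $R, A, B, C, D$, count the multiplicities asymptotically in $q = p^e$, and then read off the Frobenius Betti numbers and $F$-signature via the preceding lemma.

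I begin with the toric description of $R$: the monomials in $R$ are exactly $x^ay^bz^c$ with $a+b+c \in 2\ints$ and $a+b \geq c$, so $R^{1/q}$ is the $\bbk$-span of $x^{a/q}y^{b/q}z^{c/q}$ subject to the same two constraints. Writing $a = qA_0 + a'$, $b = qB_0 + b'$, $c = qC_0 + c'$ with $0 \leq a',b',c' < q$, these monomials group into equivalence classes indexed by residues $(a',b',c')$ lying in a three-dimensional analogue of the fundamental region $\calP(l)$ used in $\S$\ref{sec:scroll_m}. For each residue class, the cyclic $R$-submodule of $R^{1/q}$ generated by the associated collection of monomials has a Hilbert series matching, up to shift, exactly one of $R, A, B, C, D$. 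Identifying the explicit graded generators of $A, B, C, D$ as fractional ideals and mapping them to the candidate generators of each submodule yields a graded surjection, which by comparison of Hilbert series must be an isomorphism.

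This produces a decomposition $\er \cong R^{r_R} \oplus A^{r_A} \oplus B^{r_B} \oplus C^{r_C} \oplus D^{r_D}$ in which each multiplicity counts lattice points of a three-dimensional polytope scaled by $q$. A direct residue-by-residue count gives each multiplicity the asymptotic form $\alpha_M q^3 + O(q^2)$ for a rational volume $\alpha_M$. Because $A$ is the canonical module of $R$, (\ref{eq:sr_ae_be}) forces $\alpha_R = \alpha_A$, and this common value is the $F$-signature $s(R)$; the combinatorial count identifies it as $\tfrac{5}{12}$.

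Finally, the Frobenius Betti numbers assemble as
\begin{align*}
	\beta_i^F(R) = \alpha_R \beta_i(R) + \alpha_A \beta_i(A) + \alpha_B \beta_i(B) + \alpha_C \beta_i(C) + \alpha_D \beta_i(D),
\end{align*}
plugging in $\mu(R)=1$, $\mu(A)=2$, $\mu(B)=\mu(C)=3$, $\mu(D)=6$ for $i=0$, and the closed forms $\beta_i(A)=3 \cdot 2^{i-1}$, $\beta_i(B)=\beta_i(C)=3 \cdot 2^i$, $\beta_i(D)=3 \cdot 2^{i+1}$ from the preceding lemma for $i \geq 1$. The main obstacle will be the first stage: carrying out the three-dimensional residue bookkeeping and verifying that every Hilbert-series match is upgraded to a genuine isomorphism via matching generators (the two-dimensional case of $\S$\ref{sec:scroll_m} is strictly easier). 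Once this is done, the identities $\ehk(R) = \tfrac{7}{4}$ and $\beta_i^F(R) = \tfrac{9}{4}\cdot 2^{i-1}$ follow from a short arithmetic computation; specifically, the combinatorics should deliver $\alpha_B + \alpha_C + 2\alpha_D = \tfrac{1}{6}$, whence $\beta_i^F(R) = 3 \cdot 2^{i-1}\bigl(\alpha_A + 2(\alpha_B + \alpha_C + 2\alpha_D)\bigr) = 3\cdot 2^{i-1} \cdot \tfrac{3}{4} = \tfrac{9}{4} \cdot 2^{i-1}$.
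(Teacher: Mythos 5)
Your proposal is correct in outline and its arithmetic is consistent, but it commits to a substantially heavier middle step than the paper uses, and it is worth seeing why that step can be bypassed. The paper starts exactly as you do: a toric decomposition $R'^{\frac{1}{q}}=\oplus_{(i,j,k)} N_{i,j,k}$ indexed by residue classes, proved direct via the $(\ints/q)^3$-grading. But it never matches the non-free summands with specific modules among $A,B,C,D$. Instead it exploits three shortcuts: (i) every $N_{i,j,k}$ is generated by at most $3$ elements, so $D$, with $\mu(D)=6$, cannot occur as a summand; (ii) writing $R^{\frac{1}{q}}=R^{a_e}\oplus A^{b_e}\oplus M_e$ with $M_e$ built from $B$'s and $C$'s, Sannai's equality (\ref{eq:sr_ae_be}) yields $\lim b_e/p^{3e}=s(R)=\frac{5}{12}$ once the free summands --- exactly the $\calP(1)$ classes, identified by exhibiting relations among the generators of the other classes and counted via Pick's theorem --- give $\lim a_e/p^{3e}=\frac{5}{12}$; (iii) a rank count ($\rank R^{\frac{1}{q}}=q^3$ and $\rank A=\rank B=\rank C=1$) gives the \emph{combined} multiplicity $\lim c_e/p^{3e}=\frac{1}{6}$ of $B$ and $C$, whose individual split is irrelevant because $\beta_i(B)=\beta_i(C)$ for all $i$. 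Observe that your one unproved constant, $\alpha_B+\alpha_C+2\alpha_D=\frac{1}{6}$, is precisely this rank identity (note $\rank D=2$ from (\ref{eq:kr3b})), so the ``main obstacle'' you flag --- matching each three-dimensional residue class to one of the five indecomposables by Hilbert series and upgrading graded surjections to isomorphisms --- is avoidable: only the free classes need to be identified. If you do carry out the full identification, it succeeds but collapses: the two-generated classes satisfy $N\cong R'\left<1,y/x\right>\cong A=(x^2,xy)$ and the three-generated ones $N\cong R'\left<1,y/x,z/x\right>\cong B=(x^2,xy,xz)$, so in fact $\alpha_C=\alpha_D=0$ (your summands are also not cyclic, as the $\calP(2)$ and $\calP(3)$ classes need two and three generators). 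Finally, you should insert the completion bookkeeping the paper records --- the indecomposable MCMs are classified over $\bbk[\![x^2,xy,y^2,xz,yz]\!]$, and one passes between $R'$ and $R$ using Krull--Schmidt and the invariance of minimal generator counts and free-summand counts under completion.
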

\begin{proof}
Let $R'= \mathbb{k}[x^2,xy,y^2,xz,yz]$. A monomial $x^iy^jz^k$ belongs to $R'$ if and only if $i+j\geq k$ and $i+j+k$ is even.
 Let $q=p^e>2$. Consider the following set of triplets in $\mathbb{Z}^3$:
 \begin{align*}
 	\calP&:=\left\{ (i,j,k) \mid 0\leq i,j,k< 2q;  i+j+k\in 2\mathbb{Z}; i+j\geq k; \text{ at most one of } i,j,k \text{ can exceed } q \right\}.
 \end{align*}
Then $R'^{\frac{1}{q}}$ is generated by 
$\left\{ x^{\frac{i}{q}}y^{\frac{j}{q}} z^{\frac{k}{q}} \mid (i,j,k)\in \calP \right\}$. Divide the set $\calP$ into three disjoint subsets as follows:
\begin{align*}
	\calP(1)&:=\left\{(i,j,k)\mid 0\leq i,j,k< q,  i+j+k\in 2\mathbb{Z}, 0\leq i+j-k \right\},\\
	\calP(2)&:=\left\{(i,j,k)\mid 0\leq j,k< q, q\leq i<2q,  i+j+k\in 2\mathbb{Z}, 0\leq i+j-k< 2q \right\},\\
	\text{and }	\calP(3)&:=\left\{(i,j,k)\mid 0\leq j,k< q, q\leq i<2q,  i+j+k\in 2\mathbb{Z}, 2q\leq i+j-k \right\}.
\end{align*}
For each $(i,j,k)\in \calP$, define an $R$-submodule $N_{i,j,k}$ of $R'^{\frac{1}{q}}$ as follows:
\begin{align*}
	N_{i,j,k}=\begin{cases}
		R'\left< x^{\frac{i}{q}}y^{\frac{j}{q}}z^{\frac{k}{q}} \right> &\text{if } (i,j,k)\in \calP(1)\\
		R'\left< x^{\frac{i}{q}}y^{\frac{j}{q}}z^{\frac{k}{q}},x^{\frac{i-q}{q}}y^{\frac{j+q}{q}}z^{\frac{k}{q}} \right> & \text{if } (i,j,k)\in \calP(2)\\
		R'\left< x^{\frac{i}{q}}y^{\frac{j}{q}}z^{\frac{k}{q}},x^{\frac{i-q}{q}}y^{\frac{j+q}{q}}z^{\frac{k}{q}}, x^{\frac{i-q}{q}}y^{\frac{j}{q}}z^{\frac{k+q}{q}} \right> &\text{if } (i,j,k)\in \calP(3)
	\end{cases}.
\end{align*}
Note that $R'^{\frac{1}{q}}=\sum_{(i,j,k)\in \calP} N_{i,j,k}$. We claim that 
\begin{align}
	\label{claim:dir_sum_n_ijk}
	R'^{\frac{1}{q}}=\oplus_{(i,j,k)\in \calP} N_{i,j,k}.
\end{align}
Assume the claim. Since the completion of $R'$ is $R$, the number of minimal generators in each summand of  $R^{\frac{1}{q}}$ is the same as of $R'^{\frac{1}{q}}$. Also the number of minimal generators of each summand of $R'^{\frac{1}{q}}$ in the above decomposition (\ref{claim:dir_sum_n_ijk}) is at most $3$, it follows that $D$ can not be a summand of $R^{\frac{1}{q}}$ as $\mu(D)=6$. Assume that, $R^{\frac{1}{q}}=R^{a_e}\oplus A^{b_e}\oplus M_e$ for some integers $a_e,b_e$; and $B$ or $C$ are the only summands of $M_e$. We need to determine $a_e$, which will be used to compute the $F$-signature.

\noindent
Note that for each $(i,j,k)\in \calP(2)$, the generators of $N_{i,j,k}$ have a relation $$xyx^{\frac{i}{q}}y^{\frac{j}{q}}z^{\frac{k}{q}}-x^2x^{\frac{i-q}{q}}y^{\frac{j+q}{q}}z^{\frac{k}{q}}=0.$$ Similarly, for each $(i,j,k)\in \calP(3)$, the generators of $N_{i,j,k}$ have relations
\begin{eqnarray*}
	xyx^{\frac{i}{q}}y^{\frac{j}{q}}z^{\frac{k}{q}}-x^2x^{\frac{i-q}{q}}y^{\frac{j+q}{q}}z^{\frac{k}{q}}=0 &\text{and}&xzx^{\frac{i}{q}}y^{\frac{j}{q}}z^{\frac{k}{q}}-x^2 x^{\frac{i-q}{q}}y^{\frac{j}{q}}z^{\frac{k+q}{q}}=0.
\end{eqnarray*}
Thus, for each $(i,j,k)\in \calP(1)$, $N_{i,j,k}$ are the only free summands of $R'^{\frac{1}{q}}$. Since the number of free summands does not change under completion, $a_e$ is exactly the same as the cardinality of $\calP(1)$. Note that
\begin{align*}
	 \left\{(i,j,k)\mid 0\leq i,j,k< q, i+j\geq k \right\}&=\sqcup_{r=0}^1  \{(i,j,k)\mid 0\leq i,j,k< q, i+j\geq k,\\
	 &\hspace{4cm} i+j+k \equiv r \mod 2 \}.
\end{align*}
The cardinality of the left hand side is:
\begin{align*}
	\#\left\{(i,j,k)\mid 0\leq i,j,k< q, i+j\geq k \right\}&=\sum_{k=0}^{q-1} \#\left\{(i,j)\mid 0\leq i,j< q, i+j\geq k \right\} \\
	&= \sum_{k=0}^{q-1} \left( (q-1)^2-\frac{1}{2}k^2+\frac{4q-k-4}{2}+1 \right)\\
	&=q^3 -\frac{1}{2}\sum_{k=1}^{q-1}(k^2+k)\\
	&=q^3-\frac{1}{2}\left(\frac{q(q-1)(2q-1)}{6}+\frac{q(q-1)}{2} \right)\\
	&=\frac{5}{6}q^3+\frac{1}{6}q,
\end{align*}
where the second equality follows from Pick's theorem (\ref{thm:pick}). Thus,
\begin{align*}
	a_e=\#\calP(1)&= \#\{(i,j,k)\mid 0\leq i,j,k< q, i+j\geq k, i+j+k \equiv 0 \mod 2 \} \\
	&=\frac{5}{12}p^{3e}+\mathcal{O}(p^{3e-1}).
\end{align*}
Note that $\dim R=3$. Thus, the $F$-signature is $s(R)=\lim_{e\to \infty} \frac{a_e}{p^{3e}}=\frac{5}{12}$. Also, (\ref{eq:sr_ae_be}) yields $\lim_{e\to \infty} \frac{b_e}{p^{3e}}=\frac{5}{12}$. Also note that, $\rank R^{\frac{1}{q}}=q^d$ and $\rank A=1$. By using the short exact sequences in (\ref{eq:br2a}) and (\ref{eq:ra2c}), we get $\rank B=\rank C= 1$. Let $M_e$ is summand of $c_e$ copies of $B$ or $C$. Then $\lim_{e\to \infty}\frac{a_e +b_e \rank A +c_e \rank B}{p^{3e}} =1$. Therefore $\lim_{e\to \infty} \frac{c_e}{p^{3e}}=\frac{2}{12}$.
 Hence
 \begin{align*}
 	\beta_{i}^F(R)&=\lim_{e\to \infty} \frac{a_e\beta_i(R)+b_e\beta_{i}(A)+c_e\beta_{i}(B)}{p^{3e}}\\ &=\begin{cases}
 		\lim_{e\to \infty} \frac{a_e+2b_e+3c_e}{p^{3e}} & \text{ if }i=0\\
 		\lim_{e\to \infty} \frac{3\times 2^{i-1}b_e+3\times 2^ic_e}{p^{3e}} &\text{ if }i\geq 1
 	\end{cases} \\
 	&=\begin{cases}
 		\frac{7}{4}&\text{ if }i=0\\
 		\frac{9}{4}\times 2^{i-1} &\text{ if }i\geq 1
 	\end{cases}.
 \end{align*}
It remains to prove the claim \ref{claim:dir_sum_n_ijk}. It is enough to show that 
$$N_{i,j,k}\cap \sum_{\substack{(i',j',k')\in \calP,\\ (i',j',k')\neq (i,j,k)}} N_{i',j',k'}=0.$$
But this can be proved as a $\bbk$-vector space. A benefit of seeing $N_{i,j,k}$ as a $\bbk$-vector space is that we may assign  $(\mathbb{Z}/q,\mathbb{Z}/q,\mathbb{Z}/q)$ degree  to it and treat it as a standard monomial grading. Thus, it is enough to show $N_{i,j,k}\cap N_{i',j',k'}=0$ whenever $(i,j,k)\neq (i',j',k')$. Let $\xi$ be an element in the minimal generating set of $N_{i,j,k}$ and $x^{i_1}y^{j_1}z^{k_1}$ be a monomial in $R'$. Then, comparing the non-integral part of the powers of $x,y,z$ in $x^{i_1}y^{j_1}z^{k_1}\xi$, we can conclude that $x^{i_1}y^{j_1}z^{k_1}\xi \notin N_{i',j',k'}$ for any $(i',j',k')\in \calP$ different from $(i,j,k)$. Thus, $N_{i,j,k}\cap N_{i',j',k'}=0$ whenever $(i,j,k)\neq (i',j',k')$. This completes the proof.
\end{proof}

\section{$R=\bbk[x^2,y^2,z^2,xy,xz,yz]$}
\label{sec:fixed_ring}
\numberwithin{equation}{section}
Let $R=\bbk[x^2,y^2,z^2,xy,xz,yz]$, and $\operatorname{char}\bbk=p>2$.  Since the Frobenius Betti numbers do not change under completion and flat extension, we may assume $R= \mathbb{k}[\![x^2,y^2,z^2,xy,xz,yz]\!]$, and $\bbk$ is an algebraically closed field. Note that $\dim R=3$. The list of indecomposable MCM modules and the exact sequences stated below are explained in the proof of \cite[Theorem 4.1]{AR89cmtypeofcmrings}. Only indecomposable MCM modules are exactly $\{R,A,B\}$, where $A$ is the canonical module of $R$ and $B$ is the first syzygy of $A$, that is,
\begin{align}
	\label{ex:br3a}
	0\to B \to R^3 \to A \to 0.
\end{align}
 Dualizing the above exact sequence:
 \begin{align}
 	\label{ex:ra3b}
 	0\to R \to A^3 \to B \to 0.
 \end{align}
 \begin{lemma}
 	Assume the setup of this section. Then we have the following Betti numbers for the MCM modules:
 \begin{center}
 \begin{tabular}{ccc}
 	$\beta_i(A)=\begin{cases}
 		3 &\text{if }i=0\\
 		8\times 3^{i-1} &\text{for all }i\geq 1
 	\end{cases}$,
 	&and& $\beta_i(B)=8\times 3^{i}$ for all $i\geq 0$. 
 \end{tabular}
\end{center}
 	
 \end{lemma}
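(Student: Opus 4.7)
The approach parallels the proof of the analogous lemma for the $(2,1)$-scroll in Section~\ref{sec:scroll_21}: feed the two short exact sequences (\ref{ex:br3a}) and (\ref{ex:ra3b}) into $-\otimes_R\bbk$, extract the long exact sequences of $\Tor$, and read off a recursion that relates $\beta_i(A)$ and $\beta_i(B)$.

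First I would apply $-\otimes_R\bbk$ to (\ref{ex:br3a}). Because $\mu(A)=3$, the surjection $R^3\otimes\bbk\to A\otimes\bbk$ is in fact an isomorphism, which forces $\beta_{i+1}(A)=\beta_i(B)$ for all $i\geq 0$. Next I would apply $-\otimes_R\bbk$ to (\ref{ex:ra3b}). The key input here is that $\mu(B)=\mu(A^3)-1=8$, so the map $\bbk=R\otimes\bbk\to A^3\otimes\bbk$ is injective; this yields $\beta_i(B)=3\beta_i(A)$ for every $i\geq 1$.

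Combining the two relations gives $\beta_{i+1}(A)=3\beta_i(A)$ for $i\geq 1$, and with the base case $\beta_1(A)=\beta_0(B)=\mu(B)=8$, a straightforward induction produces $\beta_i(A)=8\cdot 3^{i-1}$ for $i\geq 1$. The formula $\beta_i(B)=8\cdot 3^i$ for all $i\geq 0$ then follows immediately from $\beta_i(B)=\beta_{i+1}(A)$, with the $i=0$ case being exactly $\mu(B)=8$.

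The only step that is not a direct mechanical unwinding of the $\Tor$ long exact sequences is the verification that $\mu(B)=8$. I expect to argue it as follows: since (\ref{ex:ra3b}) is obtained by dualizing (\ref{ex:br3a}) against $\omega_R=A$, the map $R\to A^3$ sends $1\in R=\operatorname{Hom}_R(A,\omega_R)$ to $(g_1,g_2,g_3)\in A^3=\operatorname{Hom}_R(R^3,\omega_R)$, where $g_1,g_2,g_3$ are the three minimal generators of $A$ used for the surjection in (\ref{ex:br3a}). Since each $g_i\notin\frakm A$, the triple is nonzero modulo $\frakm A^3$, so $\mu(B)=\mu(A^3)-1=8$.
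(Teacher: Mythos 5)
Your proposal is correct, and its skeleton coincides with the paper's: both tensor (\ref{ex:br3a}) and (\ref{ex:ra3b}) with $\bbk$ to get $\beta_{i+1}(A)=\beta_i(B)$ for $i\geq 0$ and a relation $3\beta_i(A)=\beta_i(B)$, then run the evident recursion. The genuine difference lies in how the key numerical inputs are secured. The paper works with the graded structure: it computes $H(A,t)=-H(R,t^{-1})=\frac{t^2(3+t)}{(1-t)^3}$, deduces that $A$ is minimally generated by three elements of degree $2$, rewrites (\ref{ex:br3a}) as $0\to B\to \left(R(-2)\right)^3\to A\to 0$, and obtains $H(B,t)=\frac{8t^3}{(1-t)^3}$, hence $\mu(B)=8$; and since the long exact sequence only hands it $3\beta_i(A)=\beta_i(B)$ for $i\geq 2$, it recovers $\beta_1(B)=24$ separately from the Euler-characteristic identity (\ref{eq:3beta1a}) of the five-term exact sequence. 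You instead identify the map $R\to A^3$ in (\ref{ex:ra3b}) explicitly, via $\operatorname{Hom}_R(A,\omega_R)\cong R$ and $\operatorname{Hom}_R(R^3,\omega_R)\cong A^3$, as $1\mapsto (g_1,g_2,g_3)$; since $g_1\notin \frakm A$, the induced map $\bbk\to A^3\otimes\bbk$ is injective, which kills the connecting map $\Tor_1(B,\bbk)\to\bbk$ and therefore yields $\mu(B)=8$ \emph{and} $\beta_1(B)=3\beta_1(A)$ in one stroke, making the Euler-characteristic step unnecessary. Your route is purely module-theoretic (no grading or Hilbert series needed) and is cleaner at $i=1$; the paper's route avoids unwinding the duality map and produces the degrees of the generators of $A$ and $B$ as a useful byproduct. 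Two small points of hygiene: your middle paragraph momentarily cites $\mu(B)=8$ as the ``key input'' before it is proved --- the implication should run, as your final paragraph in fact does, from the explicit map to injectivity of $\bbk\to A^3\otimes\bbk$ and only then to $\mu(B)=8$; and your identification of the map $R\to A^3$ uses that (\ref{ex:ra3b}) is precisely the $\omega_R$-dual of (\ref{ex:br3a}) with cokernel identified with $B$ via $\operatorname{Hom}_R(B,\omega_R)\cong B$ --- this is exactly how the paper introduces the sequence, and the identification of the cokernel does not disturb the map $R\to A^3$, so your argument is sound.
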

\begin{proof}
From the short exact sequence in (\ref{ex:br3a}), we get an exact sequence $0\to \Tor_1(A,\mathbb{k})\to B\otimes\mathbb{k}\to \mathbb{k}^3\to A\otimes \mathbb{k} \to 0$ and $\Tor_{i}(B,\mathbb{k})=\Tor_{i+1}(A,\mathbb{k})$ for all $i\geq 1$. Since $\mu(A)=3$, so $\dim_\bbk \Tor_1(A,\mathbb{k})=\dim_\bbk B\otimes\mathbb{k}$. Thus, $\dim_\bbk \Tor_{i+1}(A,\mathbb{k})=\dim_\bbk \Tor_{i}( B,\mathbb{k})$ for all $i\geq 0$. That is,
\begin{align*}
	\beta_{i+1}(A)=\beta_i(B) \text{ for all }i\geq 0.
\end{align*}
From the short exact sequence in (\ref{ex:ra3b}), we get an exact sequence
 $0\to \Tor_1(A^3,\mathbb{k})\to \Tor_1(B,\mathbb{k}) \to  \mathbb{k} \to A^3\otimes \mathbb{k} \to B\otimes\mathbb{k} \to 0$ and $\Tor_{i}(A^3,\mathbb{k})=\Tor_{i}(B,\mathbb{k})$ for all $i\geq 2$. This gives
\begin{align}
\label{eq:3beta1a}	3\beta_1(A)-\beta_1(B)+1-3\beta_{0}(A)+\beta_{0}(B)=0,\\
	\text{and }3\beta_i(A)=\beta_i(B)\text{ for all } i\geq 2.
\end{align}
  The Hilbert series of $R$ is $H(R,t)=\frac{1+3t}{(1-t)^3}$. So the Hilbert series of $A$ is $H(A,t)=-H(R,t^{-1})=\frac{t^2(3+t)}{(1-t)^3}$ \cite[Corollary 4.4.6]{BH93}. This shows that $A$ is minimally generated by three elements, all of which are of degree $2$. Consequently,  (\ref{ex:br3a}) can be expressed in the graded setting as:
  \begin{equation}
  	\label{ex:br3a_graded}
  	0\to B \to \left(R(-2)\right)^3 \to A \to 0.
  \end{equation}
From the above graded exact sequence, the Hilbert series of $B$ is determined as follows: 
  \begin{align*}
  	H(B,t)&=H(\left(R(-2)\right)^3,t)-H(A,t)\\
  	&=3t^2H(R,t)-H(A,t)\\
  	&=\frac{8t^3}{(1-t)^3}.
  \end{align*}
Therefore, $B$ is minimally generated by $8$ elements. Hence $\beta_{0}(B)=8$ and so $\beta_1(A)=\beta_{0}(B)=8$. Also (\ref{eq:3beta1a}) gives $\beta_{1}(B)=24$. So $\beta_2(A)=\beta_{1}(B)=24$. Now $3\beta_i(A)=\beta_i(B)=\beta_{i+1}(A)$ for all $i\geq 1$, gives $\beta_i(A)=8\times 3^{i-1}$ for all $i\geq 1$. Consequently, $\beta_{i}(B)=\beta_{i+1}(A)=8\times 3^i$ for all $i\geq 0$.
\end{proof}
\begin{theorem}\label{thm:second_veronese}
	Assume the setup of this section. Then the $F$-signature, Hilbert-Kunz multiplicity, and the Frobenius Betti numbers of $R$ are as follows:
	\begin{eqnarray*}
		s(R)=\frac{1}{2},&\ehk(R)=2,&\text{ and  }\beta_i^F(R)=4\times 3^{i-1} \text{ for all }i\geq 1.
	\end{eqnarray*}
\end{theorem}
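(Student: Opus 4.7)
Following the template of Sections~\ref{sec:scroll_m} and~\ref{sec:scroll_21}, the plan is to decompose $R^{1/q}$ (for $q = p^e$, which is odd since $p > 2$) as an $R$-module into a direct sum of copies of the indecomposable MCM modules $R$ and $A$, by grouping the monomial basis of $R^{1/q}$ according to fractional exponents. Setting $u = x^{1/q}$, $v = y^{1/q}$, $w = z^{1/q}$, the $R$-module $R^{1/q}$ is the $\bbk$-span of the monomials $u^i v^j w^k$ with $i + j + k$ even, and the $R$-action is by monomials $u^{qa} v^{qb} w^{qc}$ with $a + b + c$ even.

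For each triple $(i', j', k') \in \{0, \ldots, q-1\}^3$, I will define the $\bbk$-subspace
\[
	N_{i', j', k'} := \bigoplus_{\substack{a, b, c \geq 0 \\ a + b + c \,\equiv\, i' + j' + k' \pmod{2}}} \bbk \cdot u^{qa + i'} v^{qb + j'} w^{qc + k'}
\]
of $R^{1/q}$. Since $R$-multiplication preserves both the residues of the exponents modulo $q$ and the parity of $a + b + c$, each $N_{i', j', k'}$ is an $R$-submodule; and because distinct triples index disjoint sets of monomials, $R^{1/q} = \bigoplus_{(i', j', k')} N_{i', j', k'}$ as $R$-modules.

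I next identify each summand. When $i' + j' + k'$ is even, $N_{i', j', k'}$ is cyclic, generated by $u^{i'} v^{j'} w^{k'}$, and hence isomorphic to $R$. When $i' + j' + k'$ is odd, the minimal generators are exactly the three monomials $u^{q + i'} v^{j'} w^{k'}$, $u^{i'} v^{q + j'} w^{k'}$, $u^{i'} v^{j'} w^{q + k'}$, arising from the smallest exponent triples $(a,b,c) \in \{(1,0,0),(0,1,0),(0,0,1)\}$ allowed by the parity constraint. Since $N_{i', j', k'}$ is a direct summand of the MCM module $R^{1/q}$ hence itself MCM, has rank one (any two of its three generators are $K(R)$-proportional via elements like $u^q v^q / v^{2q} \in K(R)$), and since $R$ and $A$ are the only rank-one indecomposable MCM modules (with $\mu(R) = 1$ and $\mu(A) = 3$, while $\mu(B) = 8$), we conclude $N_{i', j', k'} \cong A$.

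A parity count, using that $q$ is odd, gives $(q^3 + 1)/2$ triples with even sum and $(q^3 - 1)/2$ with odd sum, so $R^{1/q} \cong R^{(q^3 + 1)/2} \oplus A^{(q^3 - 1)/2}$. Plugging these multiplicities into the definitions, together with the Betti numbers from the preceding lemma and the vanishing $\beta_i(R) = 0$ for $i \geq 1$, yields
\[
	s(R) = \tfrac{1}{2}, \qquad \ehk(R) = 2, \qquad \beta_i^F(R) = 4 \cdot 3^{i-1} \text{ for all } i \geq 1.
\]
The main obstacle is the identification $N_{i', j', k'} \cong A$ in the odd-sum case; the cleanest route is via the MCM classification plus a minimal-generator count, which sidesteps writing down an explicit presentation-level isomorphism.
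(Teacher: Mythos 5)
Your proof is correct, but it takes a genuinely different route from the paper. The paper's proof of Theorem \ref{thm:second_veronese} is essentially a citation: it invokes \cite[Example 4.7, Theorem 4.6]{JNSWY23lower} for the facts that $R$ and $A$ are the only summands of $R^{\frac{1}{p^e}}$ and for the values $\ehk(R)=2$ and $s(R)=\frac12$, then uses Sannai's observation (\ref{eq:sr_ae_be}) to get $\lim_{e\to\infty} b_e/p^{3e}=\frac12$ for the multiplicity of the canonical module, and finally feeds the Betti-number lemma into the limit. You instead give a self-contained monomial decomposition in the style the paper uses for the other two rings (Sections \ref{sec:scroll_m} and \ref{sec:scroll_21}): grouping the monomial basis of $R^{\frac1q}$ by exponent residues mod $q$ and parity yields exact multiplicities $a_e=\frac{q^3+1}{2}$ copies of $R$ and $b_e=\frac{q^3-1}{2}$ copies of $A$, from which all three invariants follow without appealing to \cite{JNSWY23lower} or to (\ref{eq:sr_ae_be}). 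What your approach buys is uniformity of the paper's method across all three cases and exact, rather than asymptotic, summand counts (e.g.\ $\mu\bigl(R^{\frac1q}\bigr)=2q^3-1$ exactly); what the paper's approach buys is brevity. Two small points you should tighten: in the odd-sum case, to conclude $N_{i',j',k'}\cong A$ you should note that $N_{i',j',k'}$ is torsion-free of rank one, hence indecomposable, and that $\rank B=2$ (which follows from (\ref{ex:br3a}) since $\rank A=1$), so rank alone excludes $B$ and excludes $R^{\oplus 3}$, after which $\mu=3$ excludes $R$; and strictly speaking the monomial decomposition should be carried out over the graded ring and then completed, exactly as the paper does for the scroll of type $(2,1)$, so that Krull--Schmidt applies when you read off $a_e$ as the maximal free rank. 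Both repairs are routine.
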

\begin{proof}
 Utilize \cite[Example 4.7, Theorem 4.6]{JNSWY23lower}, to observe that $R$ and $A$ are the only summands of  $R^{\frac{1}{p^e}}$, the Hilbert-Kunz multiplicity $e_{HK}(R)=2$, and the $F$-signature $s(R)=\frac{1}{2}$. Assume  $R^{\frac{1}{p^e}}=R^{a_e}\oplus A^{b_e}$ for some integers $a_e$ and $b_e$. Then $s(R)= \lim_{e\to \infty} \frac{a_e}{p^{3e}}=\frac{1}{2}$. Also from (\ref{eq:sr_ae_be}), we get $\lim_{e\to \infty} \frac{b_e}{p^{3e}}=\frac{1}{2}$. Now $\beta_i(R^{\frac{1}{p^e}})=b_e\beta_i(A)=8b_e\times 3^{i-1}$ for all $i\geq 1$. Hence, $\beta^{F}_i(R)=\lim_{e\to \infty} \frac{\beta
_i(R^{\frac{1}{p^e}})}{p^{3e}}= 4\times 3^{i-1}$ for all $i\geq 1$.
\end{proof}
\section{Acknowledgement}
The author thanks his advisor, Manoj Kummini, for all the valuable discussions and insightful suggestions; indeed, it is his observation that the proof of the classification of Cohen-Macaulay graded rings of finite CM type can be extended to prime characteristic setup, as explained in Section \ref{sec:classification_cm_finite_type}. The computer algebra system \texttt{Macaulay2} \cite{M2} proved to be quite beneficial in analyzing many examples.

A grant from the Infosys Foundation partially supported the author.

\end{document}